\DeclareMathAlphabet{\mathbbold}{U}{bbold}{m}{n}
\def\EC{\mathcal{E}}
\newcommand{\cP}{\mathcal{P}}
\def\R{\mathbb{R}}
\def\N{\mathbb{N}}
\def\Z{\mathbb{Z}}
\newcommand{\lb}{\ell_*}
\def\pca{\operatorname{\mathbf P}}
\def\pcaf{\pca_{\mathrm F}}
\def\pcai{\pca_{\mathrm I}}
\def\larc{\operatorname{lp}}
\def\earc{\operatorname{ep}}
\def\sat{\operatorname{Sat}}
\def\vert{\operatorname{V}}
\def\NCA{{\rm (NC)}}
\def\ANZ{{\rm (ZC)}}
\def\ATC{{\rm (TC)}}
\def\AC{{\rm (ZC,TC)}}
\def\NEW#1{{\em #1}}
\newcommand{\rbar}{\overline{\R}}
\newcommand{\rmax}{\mathbb{R}_{\max}}
\def\Bd{{\mathcal B}}
\newcommand{\RBX}{\rbar^{_{\scriptstyle X}}}
\def\KK{\mathcal{K}}
\newcommand{\set}[2]{\{#1\mid #2\}}
\newtheorem{theorem}{Theorem}[section]
\newtheorem{proposition}[theorem]{Proposition}
\newtheorem{pty}[theorem]{Property}
\newtheorem{corollary}[theorem]{Corollary}
\theoremstyle{definition}
\newtheorem{ex}[theorem]{Example}
\newtheorem{definition}[theorem]{Definition}
\theoremstyle{remark}
\newtheorem{remark}[theorem]{Remark}
\newcounter{myenumerate}
\renewcommand{\themyenumerate}{(\roman{myenumerate})}
\newenvironment{myenumerate}{\begin{list}{\themyenumerate }{
\usecounter{myenumerate}\setlength{\labelsep}{0.5ex}
\setlength{\leftmargin}{0pt}\setlength{\labelwidth}{-\labelsep}
}}{\end{list}}
\newenvironment{myitemize}{\begin{list}{}{
\setlength{\labelsep}{0.5ex}
\setlength{\leftmargin}{0pt}\setlength{\labelwidth}{-\labelsep}
}}{\end{list}}
\numberwithin{equation}{section}
\begin{document}

\title[The Optimal Assignment Problem for a Countable State Space]{The Optimal Assignment Problem \\ for a Countable State Space}
\author{Marianne Akian}
\address{Marianne Akian, INRIA, Saclay--\^Ile-de-France, and CMAP, Ecole
Polytechnique, Route de Saclay, 91128 Palaiseau Cedex, France}
\email{Marianne.Akian@inria.fr}
\author{St\'ephane Gaubert}
\address{St\'ephane Gaubert, INRIA, Saclay--\^Ile-de-France, and CMAP, Ecole
Polytechnique, Route de Saclay, 91128 Palaiseau Cedex, France}
\email{Stephane.Gaubert@inria.fr}
\author{Vassili Kolokoltsov}
\address{Vassili Kolokoltsov, Department of Statistics, University of Warwick, Coventry CV4 7AL, UK}
\email{v.Kolokoltsov@warwick.ac.uk}
\subjclass[2000]{Primary 90B80; Secondary 39B42, 90C08, 90C39}




\date{\today}
\begin{abstract}
Given a $n\times n$ matrix $B=(b_{ij})$ with real entries, the optimal
assignment problem is to find a permutation $\sigma$ of
$\{1,\ldots,n\}$ maximising the sum $\sum_{i=1}^n b_{i\sigma(i)}$.  In
discrete optimal control and in the theory of discrete event systems,
one often encounters the problem of solving the equation $Bf=g$ for a
given vector $g$, where the same symbol $B$ denotes the corresponding
max-plus linear operator, $(Bf)_i:=\max_{1\leq j\leq
n}b_{ij}+f_j$. The matrix $B$ is said to be strongly regular when
there exists a vector $g$ such that the equation $Bf=g$ has a unique
solution $f$.  A result of Butkovi\v{c} and Hevery shows that $B$ is
strongly regular if and only if the associated optimal assignment
problem has a unique solution. We establish here an extension of this
result which applies to max-plus linear operators over a countable
state space.  The proofs use the theory developed in a previous work
in which we characterised the unique solvability of equations
involving Moreau conjugacies over an infinite state space, in terms of
the minimality of certain coverings of the state space by generalised
subdifferentials. 
\end{abstract}
\thanks{The two first authors were partially supported by the joint RFBR-CNRS grant 05-01-02807.}
\maketitle

\section{Introduction}

Let $B=(b_{ij})$ be a $n\times n$ matrix with real entries.
The optimal assignment problem
is to find a permutation $\sigma$ of $\{1,\ldots,n\}$ maximising the sum $\sum_{i=1}^n b_{i\sigma(i)}$.

This problem can be interpreted algebraically by introducing the
max-plus or tropical semiring, $\rmax$,
which is the set $\R\cup\{-\infty\}$, where $\R$ is the set of real numbers, 
equipped with the
addition $(a,b)\mapsto \max(a,b)$ and the multiplication $(a,b)\mapsto a+b$.
With these operations, one can define the notions of vectors, matrices, linear
operators. 
In particular, the value of the optimal assignment 
is
nothing but the permanent of the matrix $B$, evaluated in the semiring
$\rmax$.

We also associate to the matrix $B$ a linear operator over the max-plus semiring, which sends the vector $f\in\rmax^n$,
to the vector  $Bf\in \rmax^n$ given by 
$(Bf)_i:=\max_{1\leq j\leq n}b_{ij}+f_j$
(here we keep the usual notations $\max$ and $+$ for scalars, but use the
linear operator notation $Bf$ instead of a non linear one like $B(f)$).
The map $f\mapsto B(-f)$ is a special case of \NEW{Moreau conjugacy}, 
see~\cite[Chapter~11, Section~E]{rockwets}, \cite{Si}, \cite{AGK1,AGK2}.

Butkovi\v{c} and Hevery~\cite{BH} found a remarkable
relation between the equation $Bf=g$ and the optimal assignment
problem. They defined a matrix $B$ with finite real entries to be \NEW{strongly regular} when there exists a vector $g\in \R^n$
such that the equation $Bf=g$ has a unique solution $f\in \R^n$. 
They showed that $B$ is strongly regular if and only if the associated
optimal assignment problem has a unique solution. 
Further properties of strongly regular matrices appeared
in~\cite{butkovip94,Bu}. In particular, the matrix $B$ is strongly
regular if and only if the space generated by its
columns is of nonempty  interior. 

The same notion arose later on in the work of
Richter-Gebert, Sturmfels, and Theobald~\cite{RGST},
who defined a matrix to be \NEW{tropically singular} if
its columns are not in ``generic position'' in the tropical
sense, meaning that they are included in the tropical analogue of
a hyperplane. 
They showed that a (square) matrix is tropically nonsingular if and only if 
the associated optimal assignment problem has a unique solution. So
tropical nonsingularity and strong regularity coincide.

The infinite dimensional version of the optimal assignment
problem is nothing but the celebrated Monge-Kantorovich mass transportation
problem. The equation $Bf=g$ is a well known tool
in the study of this problem via the infinite dimensional linear programming formulation introduced by Kantorovitch.
Indeed, a feasible solution of the dual problem
of this linear programming problem
consists precisely (up to a change of sign) of a pair of functions
$f,g$ such that $Bf\leq g$, and when $f$ and $g$ are optimal,
a complementary slackness property shows, at least formally,
that $Bf=g$. This motivates the search of infinite dimensional
analogues of the theorem of Butkovi\v{c} and Hevery. 

The cases in which the state space is non compact can be
regarded as degenerate. In this paper, we consider
the simplest among these cases: we 
study the optimal assignment problem over a
denumerable state space. 

Loosely speaking, this problem aims at finding the optimal marriages
in a society with a denumerable number of boys and girls. The interest
in these questions goes back to the very origin of matching theory, since
infinite graphs were already considered in K\"onig's book~\cite{konig}.
The theory of matching in infinite graphs has been considerably developed
after K\"onig, we refer the reader
to the survey of Aharoni~\cite{aharoni}, in which generalisations
of fundamental results in matching theory, like
K\"onig's theorem, Hall's marriage theorem, or Birkhoff's
theorem on bistochastic matrices, can be found.

In this paper, we extend the theorem
of Butkovi\v{c} and Hevery to the denumerable
setting, under some critical technical assumptions.

Our approach relies on the characterisation of the
existence and of the uniqueness of the solution
of the equation $Bf=g$ in terms of covering
by generalised subdifferentials given in our previous work~\cite{AGK1,AGK2}.

This characterisation originates from a result 
of Vorobyev~\cite[Theorem  2.6]{Vo}, who dealt with
a finite state space and introduced
a notion of ``minimal resolvent coverings'' of $X$.
Vorobyev's approach
was systematically developed by Zimmermann~\cite[Chapter 3]{Zi.K}, who 
considered several algebraic structures and 
allowed in particular the matrix $B$ to have $-\infty$ entries.
%
%
The sets arising in Vorobyev's covering were shown to
be special cases of subdifferentials in~\cite{AGK1,AGK2}, leading
to an extension of Vorobyev's theorem to Moreau conjugacies 
and even to the more general
case of ``functional Galois connections''.
The existence and uniqueness results proved there contain as special cases
Vorobyev's combinatorial result, and some properties of convex analysis
(for instance, that 
an essentially smooth lower semicontinuous proper convex function
on $\R^n$ has a unique preimage by the Fenchel transform).

In the characterisation of the existence and uniqueness
of the solution of $Bf=g$ in~\cite{AGK1,AGK2}, some
mild compactness assumptions are needed. 
These assumptions lead us here to require a tightness
condition on the kernel, see Assumption~\ATC\ below. The latter
is of the same nature as the tightness condition used by
Akian, Gaubert and Walsh~\cite{AGW} in denumerable
max-plus spectral theory.

We also note that in the denumerable case, the value of the permanent may be ill
defined, because the weight of a permutation is 
the sum of a possibly divergent series. However, the optimality
of a permutation can be expressed in full generality, because
the difference of weights of permutations make sense under general
circumstances, see Definition~\ref{defi-assign}. This definition
is somehow reminiscent of the treatment of ``infinite extremals'' in dynamic programming, see~\cite{KM} for more background on this topic.

%
%


After a preliminary section introducing 
the notations and motivating the main assumptions, we
formulate our main results in Section~\ref{sec-form}
as Theorems~\ref{th1},~\ref{th2} and~\ref{th3}, and prove them
in Sections~\ref{sec2} and~\ref{sec3}. 

Let us conclude this introduction by listing further references.
Motivations to consider Moreau conjugacies or max-plus linear operators
with kernels can be found in~\cite{Vo,CG,maslov73,gondranminouxbook,BCOQ,ccggq99,duality,KM,Gu,LMS,LM,mceneaney}. 
Recent development are highly influenced by tropical geometry via
the so-called dequantisation procedure~\cite{LM,itenberg}.
The Moreau conjugacies, or equivalently, the max-plus linear
operators with kernel considered here,
are the most natural $(\max,+)$-linear operators,
though they do not exhaust all of them (see e.g.\ \cite{Ak}, \cite{Ko}, \cite{LMS}, \cite{LS} and the references therein for classical and recent results on
``kernel type'' representations).
More insight on the notion of tropical singularity is given in the survey~\cite{RGST} and in the monograph~\cite{itenberg}. 


%

\section{Assumptions and preliminary results}\label{sec-prelim}

Consider a countable set $X$ (that is a finite or denumerable set),
endowed with a distance $d$,
such that bounded sets are finite. For instance one can consider
the set of natural numbers $\N$ or of integer numbers $\Z$,
with the distance $d(x,y)=|x-y|$, or the set $\Z^k$ for some $k$,
with the distance $d(x,y)=\|x-y\|$ where $\|\cdot\|$ is any norm on $\R^k$.
The previous property of the distance $d$ implies
that it defines the discrete topology
on $X$, that is all subsets of $X$ are open.
 In particular, the sets of finite, compact, and bounded subsets
of $X$ coincide. We shall denote them by $\KK$.

If $(s_K)_{K\in\KK}$ is a net with values in the set $\rbar$ of 
extended real numbers, indexed
by the compact sets of $X$, we use the notation:
\begin{align*}
\liminf_{K\in \KK } s_K:= \sup_{K\in \KK} \inf_{K'\in \KK,\; K'\supset K} s_{K'} \enspace.
\end{align*}
We define similarly $\limsup_{K\in \KK} s_K$ and if both quantities coincide 
we denote them by $\lim_{K\in \KK } s_K$, which we call the limit of
$s_K$ as $K$ tends to $X$.

Given a  \NEW{kernel}  on $X$,
$b:X\times X\to\rmax, \; (x,y)\mapsto b_{xy}$, 
which may be thought of as the square countable \NEW{matrix}
$B=(b_{xy})_{x,y\in X}\in \rmax^{X\times X}$,
a possible generalisation of the optimal assignment problem from the
finite to the countable state space case would be to consider the problem
\begin{equation}\label{assign-pb}
\text{find a bijection } F: X\to X \text{ maximising }
\limsup_{K\in \KK } \sum_{x\in K}  b_{xF(x)}\enspace ,\end{equation}
or the similar problem obtained by replacing the limsup in~\eqref{assign-pb}
by a liminf. As the limsup in~\eqref{assign-pb} may well be infinite, we 
shall rather use the following stronger definition:
\begin{definition}\label{defi-assign}
 A bijection $F : X \to X$
 is a (global) \NEW{solution}, resp.\  a  \NEW{strong solution},
of the \NEW{assignment problem}
associated to the kernel $b:X\times X\to\rmax$ if
 \begin{equation}
  \label{eq1.3}
\liminf_{K\in\KK } \sum_{x\in K} (b_{xF(x)}- b_{xG(x)})\ge 0\enspace ,
\end{equation}
resp.\ if 
\begin{equation}
  \label{eq1.4}
\liminf_{K\in \KK } \sum_{x\in K} (b_{xF(x)}- b_{xG(x)})> 0\enspace ,
\end{equation}
for any other bijection $G: X\to X$.
\end{definition}
If a strong solution exists, then it is obviously a unique 
solution to the assignment problem.

Given a kernel $b$, we define the \NEW{Moreau conjugacy} 
$B:\RBX\to\RBX$ which maps any
function $f=(f_x)_{x\in X}$ to the function $Bf=((Bf)_x)_{x\in X}$
such that
\begin{align} \label{eq1.1}
(Bf)_x=\sup_{y} (b_{xy} -f_y )
\end{align}
with the convention that $-\infty$ is absorbing for addition,
i.e., $-\infty+\lambda=\lambda+(-\infty)=-\infty$,
for all $\lambda\in\rbar$. Here, and in the sequel, the supremum is understood
over all the elements of $X$.
Like in~\cite{AGK2} and mainly for the sake of symmetry,
we work here with Moreau conjugacies~\eqref{eq1.1} rather than
with the max-plus linear maps discussed in the introduction.


We shall need the following assumptions on the kernel $b$:
\begin{myitemize}
\item[\ANZ]
 For any $x\in X $, there exist $y,z\in X$ such that $b_{xy} \neq -\infty$
and $b_{zx}\neq -\infty$.
\item[\ATC]
$\sup\set{b_{xy}}{ d(x,y)\geq n}$ tends to $-\infty$  when $n$ goes
 to infinity.
\end{myitemize}


Condition~\ANZ, which means that all the rows and columns of the matrix
$B$ are non zero (in the max-plus sense), was already used in~\cite{AGK2}.
Condition~\ATC\ is a tightness condition. It implies in particular that all
the rows and columns of $B$ are tight vectors or measures (related notions 
were defined  and used in~\cite{duality} for a  general topological space $X$,
and in~\cite{AGW} for a countable space $X$).
Under Condition \AC,  the Moreau conjugacy $B$
sends the set $\Bd(X)$ of real valued functions on $X$
that are bounded from below, to the set $\R^X$
of all real valued functions  on $X$.

By $B^T$ and $b^T$, we shall denote the transpose matrix of $B$
and its kernel, $B^T=(b^T_{xy})_{x,y\in X}$, $b^T_{xy}=b_{yx}$.
The corresponding Moreau conjugacy is then:
\begin{align*}
(B^Tg)_y =\sup_x (b_{xy}-g_x)\enspace .
\end{align*}

The pair $(B,B^T)$ defines a
Galois connection on $\RBX$, which means in particular (see
\cite{AGK2}) that $B^T$ is a pseudo-inverse of $B$ in the
sense that $B\circ B^T\circ B=B$ and $B^T\circ B\circ B^T=B^T$, hence
if the equation $Bf=g$ with a given $g\in \RBX$ has a
solution $f\in \RBX$, then necessarily $B^Tg$ is also a
solution of this equation. 

\bigskip

The infinite dimensional theory depends crucially on the 
class of functions in which the solutions to the equation $Bf=g$
are sought and on the class of bijections for the
solutions to the assignment problem. We first introduce some classes of
bijections.
\begin{definition} We define the \NEW{distance} between
two bijections $F,G : X \to X$ as 
$$
\rho (F,G)=\sup_{x}d(F(x),G(x))\in\R\cup\{+\infty\}\enspace.
$$
A bijection $F:X\to X$ is \NEW{locally bounded}
if it is at a finite distance from the identity map,
$I:X\to X,\; x\mapsto x$.
\end{definition}

The map $\rho$ satisfies all the properties of a distance except that
$\rho(F,G)$ may be infinite.
The binary relation defined as 
$F\sim G$ if $F$ and $G$ are at a finite distance 
($\rho (F,G)<\infty$) is clearly an
equivalence relation on the set of bijections of $X$, defining a
partition of this set into classes. 
The set of locally bounded bijections is the class of the
identity map.

\begin{pty}\label{pty-group}
The set of locally bounded bijections $X\to X$ is a subgroup of the group
of bijections of $X$.
\end{pty}
\begin{proof}
This follows from
$\rho(F^{-1},I)=\rho(I,F)$ and 
$\rho(F\circ G, I)\leq \rho(F\circ G, G)+\rho(G,I)=
\rho(F, I)+\rho(G,I)$
\end{proof}

\begin{definition} 
We say that a bijection $F:X\to X$ is a \NEW{local solution} 
(resp.\ a \NEW{local strong solution})
of the assignment problem associated to $b$
if Condition~\eqref{eq1.3} (resp.\ \eqref{eq1.4}) of
Definition~\ref{defi-assign}
holds for all $G$ within a finite distance from $F$.
\end{definition}

Now we define some classes of functions.
Recall that $\Bd(X)$ is the set of real valued functions on $X$,
$s=(s_x)_{x\in X}$ that are bounded from below, that is
$\inf_{x} s_x >-\infty$.
By $\ell_{\infty}=\ell_\infty (X)$, $\ell_1=\ell_1(X)$, 
$\ell_0=\ell_0(X)$, we shall denote the linear spaces (in the usual sense) 
of real valued functions on $X$, 
$s=(s_x)_{x\in X}$, such that respectively 
$\|s\|_\infty:=\sup_{x} |s_x|<\infty$, $\|s\|_1:=\sum_{x}
|s_x|<\infty$, or the limit $\lim_{x \to \infty} s_x$
exists and is finite. Here, the expression $x\to\infty$ 
refers to the filter of complements of finite sets
of $X$. Equivalently, we may choose arbitrarily a \NEW{basepoint}
$\bar x\in X$, and set $d(x):=d(x,\bar x)$. Then, $\lim_{x\to\infty} s_x = a$
if and only if $s_x$ tends to $a$ as $d(x)$ tends to infinity.
We shall also denote by $\ell_{0,1}=\ell_{0,1}(X)$ the linear space
of  functions $s=(s_x)_{x\in X}\in\ell_0$ such that for all $M>0$, 
$\|s\|_{0,1,M}:= \sup\set{\sum_{x} |s_{F(x)}-s_x|}{
F:X\to X,\text{ bijection s.t.}\; \rho(F,I)\leq M } <+\infty$.
This space can be thought of as the space of functions 
with $\ell_1$ ``partial derivatives'' and a limit at infinity.
In particular when $X=\Z^k$, all semi-norms $\|s\|_{0,1,M}$ are equivalent to
$\|s\|_{0,1}= \sum_{e\in E}\sum_{x} |s_{x+e}-s_x|$, where $E$ is the
canonical basis of $\R^k$.
For a general set $X$, in particular when $X$ is not included in 
a finite dimensional normed space (with the distance being defined 
from the norm), and when the cardinality of the
balls of radius $M$  in $X$ is not uniformly bounded,
one cannot find a finite set $E$ satisfying the above property, and one cannot 
replace the semi-norms $\|s\|_{0,1,M}$ by the following simpler ones 
$\|s\|'_{0,1,M}=\sum_{x} \max_{y,\, d(y,x)\leq M}|s_{y}-s_x|$.
Indeed, with these semi-norms, 
it may happen that $\ell_1\not\subset \ell_{0,1}$,
whereas the inclusion holds with  our definition of $\ell_{0,1}$,
as stated below.

\begin{pty}
We have $\ell_1(X)\subset\ell_{0,1}(X)\subset \ell_0(X)\subset\ell_\infty(X)\subset \Bd(X)$.\end{pty}
\begin{proof}
All these inclusions are clear, except perhaps the 
inclusion of $\ell_1$ in $\ell_{0,1}$ which follows from 
well known properties of series with positive terms:
if $s\in\ell_1$ then $s$ tends to $0$ at infinity and
$\sum_{x} |s_{F(x)}-s_x|\leq \|s\circ F\|_1+ \|s\|_{1} =2\|s\|_1$.
\end{proof}

By $\lb=\lb(X)$ we shall denote any of the former spaces.
They have the following good properties.

\begin{pty}\label{pty-lbinv}
For $\lb$ being either $\ell_1$, $\ell_0$ or $\ell_\infty$,
the space $\lb(X)$ is invariant by any bijection $F:X\to X$,
meaning that $\phi\circ F\in\lb(X)$ when $\phi\in \lb(X)$.
The space $\ell_{0,1}(X)$ is invariant by any locally bounded
bijection $X\to X$.
\end{pty}
\begin{proof}
This is clear for $\ell_\infty$. 
For $\ell_1$, this follows from properties
of series with positive terms.
For $\ell_0$, this follows
from the fact that, since the image by a bijection $F$
of any finite (compact) set of $X$ is finite,
the set $\KK$ of finite sets  is invariant by $F$: $F(\KK)=\KK$.
%
For $\ell_{0,1}$, let $\phi=(\phi_x)_{x\in X}\in\ell_{0,1}(X)$ and $F:X\to X$ be a
locally bounded bijection, and let us denote by $R=\rho(F,I)$.
Since $\ell_0$ is invariant by any bijection, then $\phi\circ F\in\ell_0$.
Now, for any $M>0$, 
and any bijection $G:X\to X$ such that $\rho(G,I)\leq M$, we have 
$\rho(F\circ G ,I)\leq M+R$, thus 
\begin{align*}
 \sum_{x} |(\phi\circ F)_{G (x)}-(\phi\circ F)_x|
&\leq \sum_{x} |\phi_{F\circ G(x)}-\phi_x |+ \sum_x |\phi_{F(x)}-\phi_x |\\
&\leq \|\phi\|_{0,1,M+R}+\|\phi\|_{0,1,R} \end{align*}
hence $\|\phi\circ F\|_{0,1,M}\leq \|\phi\|_{0,1,M+R}+\|\phi\|_{0,1,R}<+\infty$,
which shows that $\phi\circ F\in\ell_{0,1}$.
\end{proof}

We shall consider the following classes of solutions to the assignment problem.

\begin{definition} 
A bijection $F: X\to X$ is said to be a \NEW{$\lb$-bijection},
with respect to the kernel $b:X\times X\to\rmax$, if 
the sequence 
$(b_{xF(x)})_{x\in X}$ belongs to $\lb(X)$. When in addition $F$ is a solution
(in any sense) of the optimal assignment problem, we shall speak of $\lb$-solution. 
\end{definition}
\begin{remark}
In general, a solution of  the optimal assignment problem associated to 
the kernel $b$ is necessarily a solution of Problem~\eqref{assign-pb}, 
but the converse implication may not be true, because the supremum 
of the expressions in~\eqref{assign-pb} may be infinite.
However, if $F$ is a $\ell_1$-bijection, then it is a (strong) solution 
of the optimal assignment problem associated to the kernel $b$ if and only if
it is a (unique) solution of Problem~\eqref{assign-pb}.
\end{remark}

\begin{definition}\label{defi-strg-reg} 
A kernel $b$ 
or its corresponding Moreau conjugacy $B$ is said to be
\NEW{$\lb$-strongly regular} if there exists $g\in \lb$ such that (i) $f:=B^T g
 \in \lb$, (ii) $f$ is the unique solution $h$ in $\lb$ 
of the equation $Bh=g$ and (iii) $g$ is the unique solution $h$ in $\lb$
of the equation $B^Th=f$. In this case, $g$ (resp.\ $f$) is
 said to belong to the \NEW{$\lb$-simple image} of $B$ (resp.\ $B^T$).
\end{definition}

 Of course it follows from this definition that $B$ is $\lb$-strongly
 regular if and only if $B^T$ is $\lb$-strongly regular.

\begin{remark} One can show, see Remark~\ref{bij-sr-finite},
 that in the case of a finite set  $X$,
our definition  coincides with the standard definition of strong regularity
 given in the introduction and in~\cite{BH}.
 In fact we added Condition (iii) in our definition,
 which turns out to be automatically fulfilled for finite
sets $X$.
\end{remark}

\begin{definition} A matrix $B=(b_{xy})\in\rmax^{X\times X}$ (or
its kernel $b$) is \NEW{normal}
(resp.\ \NEW{strongly normal}) if all its non-diagonal  entries,
$b_{xy}$ with $x,y\in X$ and $x\neq y$, 
 are non-positive (resp.\ negative), and if all its diagonal entries,
$b_{xx}$ for  $x\in X$, are equal to $0$.
\end{definition}

This definition is literally the same as the usual finite-dimensional
one (see \cite{Bu}). The normal (resp.\ strongly
normal) matrices present a class of examples, where the identity map
is an obvious locally bounded $\lb$-solution (resp.\ strong
solution) to the assignment problem.
As our first result will show,
this class of matrices present natural ``normal forms'' for strongly
regular matrices.

\begin{definition} The kernels  $b,c:X\times X\to \rmax$ 
are 
\NEW{$\lb$-similar} if
 there exist two locally bounded bijections $H,K : X \to X$
  and two functions $\phi$ and $\psi$ from $\lb(X)$ such that
 \begin{equation}
  \label{eq1.5}
 c_{xy}= b_{H(x)K(y)}-\phi_x-\psi_y\enspace.
 \end{equation}
When $H$ (resp.\ $K$) is the identity map, we say that $b$ and $c$ are
{\em right} (resp.\ {\em left}) \NEW{$\lb$-similar}.
\end{definition}

When $X$ is finite, we recover the standard definition (see
 e.g.~\cite{Bu}). 
Indeed, matrices over the max-plus semiring are invertible if and only if 
they are the product of a permutation matrix and of a diagonal matrix with 
real diagonal entries.
So, similarity coincides with the usual notion that $C=PBP'$ 
for some invertible matrices $P$ and $P'$.

\begin{pty}
The relations of (right, left) $\lb$-similarity are equivalence relations.
\end{pty}
\begin{proof}
We first consider the relation of $\lb$-similarity. This relation
is reflexive since the identity map is locally bounded and the function $0$ (identically equal to $0$) is in $\lb$.

To see that it is symmetric, let $b$ and $c$  be $\lb$-similar, that is 
satisfying~\eqref{eq1.5} with locally bounded bijections $H$ and $K$, and
$\phi,\psi\in\lb(X)$. Then
 \begin{equation}
 b_{xy}= c_{H^{-1}(x)K^{-1}(y)}+(\phi\circ H^{-1})_x+(\psi\circ K^{-1})_y,
 \end{equation}
and by Properties~\ref{pty-lbinv} and~\ref{pty-group},
 $H^{-1}$ and $K^{-1}$ 
are locally bounded, and $\phi\circ H^{-1}$ and $\psi\circ K^{-1}$ are in
$\lb(X)$, which shows that $c$ and $b$ are $\lb$-similar.

Let us show that $\lb$-similarity is transitive. Assume that $b$ and $c$ are
$\lb$-similar and that $c$ and $c'$ are also $\lb$-similar.
This means that there exist locally bounded bijections $H,K,H',K'$
and functions  $\phi,\psi,\phi',\psi'\in\lb(X)$ satisfying~\eqref{eq1.5}
and
$
 c'_{xy}= c_{H'(x)K'(y)}-\phi'_x-\psi'_y$. 
Hence
$
 c'_{xy}= b_{H\circ H'(x)K\circ K'(y)}-(\phi\circ H'+\phi')_x-
(\psi\circ K'+\psi')_y$, 
and by Properties~\ref{pty-lbinv} and~\ref{pty-group}, and the linearity of 
$\lb(X)$, we get that $b$ and $c'$ are $\lb$-similar.

The relations of right and left $\lb$-similarity are treated
by requiring $H,H'$ or $K,K'$ to be the identity maps in the previous
arguments.
\end{proof}
\begin{remark}\label{rk-dimfinite}
In the finite dimensional case, linear programming (or network flow algorithms)
yields an effective method to reduce a matrix to a normal matrix
by similarity.
Indeed, the optimal assignment problem over a finite state space
can be formulated as a linear program, the dual of which can be written
as 
\[
\min_{\phi,\psi} \sum_x \phi_x +\psi_x,\qquad \phi,
\psi\in \R^X,\;\; \phi_x+\psi_y\geq b_{xy},\forall x,y  \enspace .
\]
The dual program has an optimal solution $(\phi^*,\psi^*)$,
except in the degenerate case in which the primal is not feasible
(meaning that there is no permutation
$F$ such that $b_{xF(x)}>-\infty$ for all $x$).
By complementary slackness, a permutation $F$ is optimal if and only if
the equality $\phi^*_x+\psi^*_y= b_{xy}$ holds whenever
$y=F(x)$. It follows that
the matrix $b_{xF(y)}-\phi_x-\psi_{F(y)}$, which is similar to $b$,
is normal.
\end{remark}

 The importance of the notion of $\lb$-similarity 
 is basically due to the following  results, which are countable
analogues to Propositions 3 and 4 in~\cite{BH}.

\begin{proposition}\label{prop1.1.i}
Conditions \ANZ, \ATC\ and $\lb$-strong regularity
are each invariant under $\lb$-similarity.
 \end{proposition}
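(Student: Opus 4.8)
The plan is to prove invariance of each of the three conditions separately, since they are of different natures, and to reduce everything to the defining relation~\eqref{eq1.5}, namely $c_{xy}=b_{H(x)K(y)}-\phi_x-\psi_y$ with $H,K$ locally bounded and $\phi,\psi\in\lb(X)$. Because $\lb$-similarity is an equivalence relation, it suffices to check that if $b$ satisfies one of the conditions then so does $c$; symmetry then gives the converse. First I would dispose of Condition~\ANZ: the entries $c_{xy}$ and $b_{H(x)K(y)}$ differ only by the finite quantities $\phi_x$ and $\psi_y$, so $c_{xy}\neq-\infty$ if and only if $b_{H(x)K(y)}\neq-\infty$. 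Since $H$ and $K$ are bijections, a nonzero entry in row $H(x)$ of $b$ corresponds to a nonzero entry in row $x$ of $c$, and similarly for columns; hence \ANZ\ transfers directly.

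Next I would treat Condition~\ATC, the tightness condition $\sup\set{b_{xy}}{d(x,y)\geq n}\to-\infty$. The key observation is that $H$ and $K$ being locally bounded means $\rho(H,I)=:R_H<\infty$ and $\rho(K,I)=:R_K<\infty$, so by the triangle inequality $d(H(x),K(y))\geq d(x,y)-R_H-R_K$. Thus if $d(x,y)\geq n$ then $d(H(x),K(y))\geq n-R_H-R_K$, and so the supremum of $b_{H(x)K(y)}$ over pairs with $d(x,y)\geq n$ is bounded above by the supremum of $b_{x'y'}$ over pairs with $d(x',y')\geq n-R_H-R_K$, which tends to $-\infty$. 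It remains to handle the additive perturbation $-\phi_x-\psi_y$. Here I would use that $\phi,\psi\in\lb(X)\subset\ell_\infty(X)$ are bounded (this inclusion is recorded in the excerpt), so $-\phi_x-\psi_y$ is bounded above by a finite constant, and adding a bounded quantity to something tending to $-\infty$ still tends to $-\infty$. This gives \ATC\ for $c$.

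Finally, and this is where I expect the main difficulty, I would establish invariance of $\lb$-strong regularity. The strategy is to show that the Moreau conjugacy $C$ of $c$ is conjugate to that of $B$ through explicit changes of variable built from $H,K,\phi,\psi$, and that these changes of variable preserve membership in $\lb$, existence and uniqueness of solutions. Concretely, from~\eqref{eq1.5} one computes $(Cf)_x=\sup_y(c_{xy}-f_y)=\sup_y(b_{H(x)K(y)}-\phi_x-\psi_y-f_y)$; substituting $y'=K(y)$ and writing $\tilde f_{y'}:=\psi_{K^{-1}(y')}+f_{K^{-1}(y')}$ turns the inner expression into $\sup_{y'}(b_{H(x)y'}-\tilde f_{y'})-\phi_x=(B\tilde f)_{H(x)}-\phi_x$. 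Thus $Cf=g$ is equivalent to $B\tilde f=\tilde g$ for suitably transformed data, and the transformations $f\mapsto\tilde f$ and $g\mapsto\tilde g$ are bijections of $\lb(X)$ onto itself. The hard part will be verifying rigorously that these substitutions do preserve $\lb$: this rests on Property~\ref{pty-lbinv} (invariance of $\lb$ under the relevant bijections, locally bounded in the case $\ell_{0,1}$) and on the linearity of $\lb(X)$ together with $\phi,\psi\in\lb$, so that adding $\psi\circ K^{-1}$ and composing with $K^{-1}$ keeps us inside $\lb$. Once the correspondence between solution sets of $Cf=g$ and $B\tilde f=\tilde g$ is set up bijectively, the three clauses (i), (ii), (iii) of Definition~\ref{defi-strg-reg} for $C$ follow from those for $B$: one exhibits the transformed witness $g$, and uniqueness transfers because the substitution is a bijection of $\lb$. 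I would also need to confirm that $C^T$ corresponds to $B^T$ under the dual substitution, so that condition (iii) is handled symmetrically, using that the pair $(C,C^T)$ is again a Galois connection.
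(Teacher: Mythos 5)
Your proposal is correct and takes essentially the same approach as the paper's proof: finiteness of $\phi,\psi$ for \ANZ, the triangle-inequality comparison of $d(x,y)$ with $d(H(x),K(y))$ (using $\rho(H,I),\rho(K,I)<\infty$) together with $\phi,\psi\in\ell_\infty$ for \ATC, and the conjugation identity $g=Cf\Leftrightarrow (g+\phi)\circ H^{-1}=B\bigl((f+\psi)\circ K^{-1}\bigr)$ combined with Properties~\ref{pty-group} and~\ref{pty-lbinv} and linearity of $\lb(X)$ to transfer clauses (i)--(iii) of Definition~\ref{defi-strg-reg}, with the transpose handled symmetrically. The only cosmetic difference is that you reduce to one implication via the equivalence-relation property, whereas the paper writes the equivalences directly.
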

\begin{proof} Let $b$ and $c$ be $\lb$-similar kernels on $X$,
thus satisfying~\eqref{eq1.5}
with locally bounded bijections $H,K:X\to X$ and $\phi,\psi\in\lb(X)$.

Since $\lb(X)\subset\R^X$, $b$ satisfies~\ANZ\ if and 
only if $c$ does. Moreover, since $H$ and $K$ are locally bounded,
we get for all $x,y\in X$:
\[ d(x,y)-\rho(H,I)-\rho(K,I)\leq d(H(x),K(y))\leq d(x,y)+\rho(H,I)+\rho(K,I)
\enspace,\]
hence $d(x,y)\to\infty$ if and only if  $d(H(x),K(y))\to\infty$, and 
since $\phi,\psi\in\lb\subset \ell_\infty$, we deduce that
$b$ satisfies~\ATC\ if and only if $c$ does.

The invariance of $\lb$-strong regularity follows from the
observation that 
$$ g=Cf \Leftrightarrow 
(g+\phi)\circ H^{-1}=B((f+\psi)\circ K^{-1} )\enspace,
$$
and so
$
 g=Bf \Leftrightarrow  
g\circ H-\phi= C(f\circ K-\psi)$. 
Indeed, let $b,f,g$ satisfy the properties of Definition~\ref{defi-strg-reg}.
Then, $g'=g\circ H-\phi\in\lb$, and since 
$
 f'=C^T g'\Leftrightarrow (f'+\psi)\circ K^{-1}= B^T((g'+\phi)\circ H^{-1})
$, 
and the last term in the previous equation is equal to $B^T g=f$,
we get that $f'=f\circ K-\psi\in\lb$, which shows Property~(i)
of Definition~\ref{defi-strg-reg} for $c,f',g'$ instead of $b,f,g$.
Moreover, we have $Ch'=g'$ if and only if $Bh=g$ for
$h=(h'+\psi)\circ K^{-1}$, and since $h\in\lb$ if and only if $h'\in\lb$,
we get that Property~(ii) of Definition~\ref{defi-strg-reg} for $c,f',g'$
is equivalent to the same property for $b,f,g$.
By symmetry, the same occurs for Property~(iii) of 
Definition~\ref{defi-strg-reg}.
\end{proof}

\begin{proposition}\label{prop1.1.ii}
The property for a kernel to have a solution
 or a local solution to the assignment problem
is invariant under $\ell_1$-similarity. The same
is true if the solution is required in addition
to be locally bounded, strong, or either a $\ell_1$, $\ell_0$ or
$\ell_\infty$-bijection, or a locally bounded $\ell_{0,1}$-bijection,
with respect to the kernel.
\end{proposition}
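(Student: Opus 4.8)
The plan is to promote the $\ell_1$-similarity into a concrete bijection on the set of bijections of $X$ that carries solutions of each listed type for $c$ to solutions of the same type for $b$. Writing the similarity as $c_{xy}=b_{H(x)K(y)}-\phi_x-\psi_y$ with $H,K$ locally bounded and $\phi,\psi\in\ell_1(X)$, I would associate to each bijection $F$ the bijection $\tilde F:=K\circ F\circ H^{-1}$, whose inverse correspondence is $\tilde F\mapsto K^{-1}\circ\tilde F\circ H$. First I would record that this correspondence respects finite distance: from $d(K(z),z)\le\rho(K,I)$ one gets $\rho(\tilde F,\tilde G)\le\rho(F,G)+2\rho(K,I)$ and, symmetrically (using that $K^{-1}$ is locally bounded by Property~\ref{pty-group}), a reverse bound, so that $F\sim G$ if and only if $\tilde F\sim\tilde G$; in particular $F$ is locally bounded if and only if $\tilde F$ is, since $\tilde I=K\circ H^{-1}$ is locally bounded.

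Next I would establish the key identity behind the difference-sums of Definition~\ref{defi-assign}. For bijections $F,G$ and a finite set $L$, the terms $-\phi_x$ cancel in $c_{xF(x)}-c_{xG(x)}$, and since $\tilde F(H(x))=K(F(x))$ the change of index $u=H(x)$ gives
\[
\sum_{x\in L}(c_{xF(x)}-c_{xG(x)})=\sum_{u\in H(L)}(b_{u\tilde F(u)}-b_{u\tilde G(u)})-\sum_{x\in L}(\psi_{F(x)}-\psi_{G(x)})\enspace .
\]
Two observations then take the $\liminf$ over $\KK$. Because $\psi\in\ell_1$ and $\ell_1$ is bijection-invariant (Property~\ref{pty-lbinv}), reindexing the absolutely convergent series by $F$ and $G$ yields $\sum_x\psi_{F(x)}=\sum_y\psi_y=\sum_x\psi_{G(x)}$, so the last sum tends to $0$; and since $H$ permutes $\KK$ preserving inclusion, replacing $L$ by $H(L)$ does not change the $\liminf$. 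I would conclude
\[
\liminf_{L\in\KK}\sum_{x\in L}(c_{xF(x)}-c_{xG(x)})=\liminf_{L\in\KK}\sum_{x\in L}(b_{x\tilde F(x)}-b_{x\tilde G(x)})\enspace ,
\]
so that, as $G\mapsto\tilde G$ sweeps out all bijections (resp.\ all bijections at finite distance from $F$), $F$ is a solution, a strong solution, or a local (strong) solution for $c$ if and only if $\tilde F$ is one of the same type for $b$; the first paragraph then upgrades these to their locally bounded variants.

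To match the $\lb$-bijection qualifiers I would rewrite the weight as $(c_{xF(x)})_x=w\circ H-\phi-\psi\circ F$, where $w:=(b_{x\tilde F(x)})_x$. For $\lb\in\{\ell_1,\ell_0,\ell_\infty\}$ one has $\phi,\psi\in\ell_1\subset\lb$ and $\psi\circ F\in\lb$, and $\lb$ is invariant under $H$ and $H^{-1}$ (Property~\ref{pty-lbinv}), so $(c_{xF(x)})_x\in\lb$ if and only if $w\in\lb$; the case $\lb=\ell_{0,1}$ is identical once $F$ is taken locally bounded, since then $\tilde F$ is locally bounded and $\ell_{0,1}$ is invariant under locally bounded bijections. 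This shows the correspondence preserves each qualifier, and hence the existence of a solution of a given type for $c$ is equivalent to that for $b$.

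The step I expect to be the main obstacle—and the reason the statement is confined to $\ell_1$-similarity rather than to general $\lb$-similarity—is the vanishing of the correction $\sum_{x\in L}(\psi_{F(x)}-\psi_{G(x)})$ in the limit, which hinges on absolute summability and would break down for $\psi$ merely in $\ell_0$ or $\ell_\infty$. Minor additional care is needed for the reindexing of the $\liminf$ by $H$ and for the possible $-\infty$ entries of $b$, where I would note that $c_{xF(x)}=-\infty$ precisely when $b_{H(x)K(F(x))}=-\infty$, so that the displayed identity stays consistent with the conventions on $\rmax$.
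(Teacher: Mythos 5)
Your proposal is correct and takes essentially the same route as the paper's proof: the same conjugation $\mathcal{T}(G)=K\circ G\circ H^{-1}$ on bijections, the same splitting identity (the paper's \eqref{eq1.6}), the vanishing of the $\psi$-correction via absolute summability of $\psi\in\ell_1$ together with $F(\KK)=H(\KK)=\KK$, and the same transfer of the qualifiers through Properties~\ref{pty-group} and~\ref{pty-lbinv}, including the locally bounded restriction in the $\ell_{0,1}$ case. Your explicit bounds such as $\rho(\tilde F,\tilde G)\leq \rho(F,G)+2\rho(K,I)$ and the remark on $-\infty$ entries merely make precise steps the paper leaves implicit.
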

\begin{proof}
Let $b$ and $c$ be $\ell_1$-similar kernels on $X$,
thus satisfying~\eqref{eq1.5}
with locally bounded bijections $H,K:X\to X$ and $\phi,\psi\in\ell_1(X)$.
Let $F,G:X\to X$ be two bijections. We have for any $K\in\KK$,
\begin{eqnarray}
\sum_{x\in K} (c_{xF(x)}- c_{xG(x)}) &=& \sum_{y\in H(K)} (b_{y K\circ F\circ H^{-1}(y)}- b_{y K\circ G\circ H^{-1}(y)})  \label{eq1.6} \\
& &\quad 
+ \sum_{x\in K} ((\psi\circ G)_x- (\psi\circ F)_x)\enspace.  \nonumber
\end{eqnarray}
Since $\psi\in\ell_1$, the limit $\lim_{K\in\KK} \sum_{x\in K} \psi_x$ exists.
Moreover, since $F(\KK)=\KK$, we get
$
 \lim_{K\in\KK} \sum_{x\in K}  (\psi\circ F)_x=
\lim_{K\in\KK} \sum_{x\in F(K)}  \psi_x= \lim_{K\in\KK} \sum_{x\in K}  \psi_x
$, 
which implies that
$\lim_{K\in\KK} \sum_{x\in K} ((\psi\circ G)_x- (\psi\circ F)_x)  =0$.
Using this and $H(\KK)=\KK$ in~\eqref{eq1.6}, we deduce
\begin{align*}
 \liminf_{K\in\KK } \sum_{x\in K} (c_{xF(x)}- c_{xG(x)})
 = \liminf_{K\in \KK} \sum_{y\in K} (b_{y K\circ F\circ H^{-1}(y)}- b_{y K\circ G\circ H^{-1}(y)})\enspace.
\end{align*}
Since the map $\mathcal{T}(G):= K\circ G\circ H^{-1}$ is a bijective
transformation from the set of bijections $X\to X$ to itself,
we deduce from the latter relation 
that 
$F$ is a solution (resp.\ a strong solution) of the assignment problem associated to the kernel $c$ if and only if $K\circ F\circ H^{-1}$ is a solution
(resp.\ a strong solution) of the assignment problem associated to the kernel 
$b$.
Since $K$ is locally bounded, the map $\mathcal{T}$ is such that $G\sim G'\implies \mathcal{T}(G)\sim \mathcal{T}(G')$ (recall that $G\sim G'$ iff $\rho(G,G')<\infty$).
Since $H$ is also locally bounded, $\mathcal{T}$ is a bijective transformation from the set of locally bounded bijections to itself.
Hence, a solution (or strong solution, etc.)
$F$ for $c$ is locally bounded if and only if the corresponding solution
$K\circ F\circ H^{-1}$ for $b$ is locally bounded.
Moreover, we also deduce that $F$ is a local solution (resp.\ a local strong solution) of the assignment problem associated to the kernel $c$ if and only if $K\circ F\circ H^{-1}$ is a local solution (resp.\ a local strong solution) of the assignment problem associated to the kernel $b$.

Finally, assume that $F$ is a $\lb$-solution for some space $\lb$ (which may be different from $\ell_1$), that is $(c_{x F(x)})_{x\in X}\in\lb(X)$.
Composing this sequence with $H^{-1}$, we get that
$(c_{H^{-1}(x) F\circ H^{-1}(x)})_{x\in X} \in\lb(X)$.
Now by~\eqref{eq1.5}, we get that
$
 b_{x K\circ F\circ H^{-1}(x)}=
c_{H^{-1}(x) F\circ H^{-1}(x)}+(\phi\circ H^{-1})_x+(\psi\circ F\circ H^{-1})_x
$ 
and since $\phi,\psi\in \ell_1(X)\subset \lb(X)$,
we deduce that $(b_{x K\circ F\circ H^{-1}(x)})_{x\in X}\in\lb(X)$
if $\lb$ is either $\ell_1$, $\ell_0$ or $\ell_\infty$.
By symmetry, we have shown, in this case, that 
$F$ is a $\lb$-solution of the assignment problem associated
 to the kernel $c$ if and only if $K\circ F\circ H^{-1}$ is a
$\lb$ solution of the assignment problem associated  to the kernel $b$.
When $\lb$ is $\ell_{0,1}$,
we need to restrict solutions to be locally bounded.
\end{proof}

\begin{proposition}\label{prop1.1.iii}
The property for a kernel to have a local solution  to
the assignment problem is invariant under $\ell_{0,1}$-similarity.
The same is true if the solution is required in addition to be
locally bounded, strong, or either a $\ell_0$ or $\ell_\infty$-bijection,
or a locally bounded $\ell_{0,1}$-bijection, with respect to the kernel.
\end{proposition}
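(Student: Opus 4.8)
The plan is to imitate the proof of Proposition~\ref{prop1.1.ii} line by line, with $\ell_1$ replaced by $\ell_{0,1}$, but now comparing the candidate solution $F$ only with bijections $G$ lying at a finite distance from it (this is what ``local'' buys us). So let $b,c$ be $\ell_{0,1}$-similar, related by~\eqref{eq1.5} with locally bounded $H,K$ and $\phi,\psi\in\ell_{0,1}(X)$, and let $F,G$ be bijections with $\rho(F,G)<\infty$. Since the identity~\eqref{eq1.6} only uses the similarity relation~\eqref{eq1.5}, it holds verbatim here:
\[
\sum_{x\in K}(c_{xF(x)}-c_{xG(x)})
=\sum_{y\in H(K)}\bigl(b_{y\,\mathcal{T}(F)(y)}-b_{y\,\mathcal{T}(G)(y)}\bigr)
+\sum_{x\in K}\bigl((\psi\circ G)_x-(\psi\circ F)_x\bigr),
\]
with $\mathcal{T}(G)=K\circ G\circ H^{-1}$. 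Everything reduces to showing that the last sum tends to $0$ as $K$ tends to $X$.

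The crucial point is that the mechanism used in Proposition~\ref{prop1.1.ii} breaks down: for $\psi\in\ell_{0,1}$ the series $\sum_x(\psi\circ F)_x$ and $\sum_x(\psi\circ G)_x$ need no longer converge separately, so I cannot argue that their difference tends to $0$ by cancelling two convergent sums. Instead I would set $L:=G\circ F^{-1}$; because $\rho(L,I)=\rho(F,G)<\infty$, the bijection $L$ is locally bounded, and the substitution $z=F(x)$ turns the last sum into $\sum_z(\psi_{L(z)}-\psi_z)$, which converges \emph{absolutely} by the very definition of $\|\psi\|_{0,1,\rho(L,I)}$. It remains to prove that its value is $0$. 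For this I would decompose $X$ into the orbits of $L$ (absolute convergence licenses the regrouping): a finite orbit contributes $0$ by telescoping around the cycle, while on an infinite orbit $\{z_i\}_{i\in\Z}$ with $L(z_i)=z_{i+1}$ the symmetric partial sums equal $\psi_{z_{N+1}}-\psi_{z_{-N}}$, and these tend to $0$ since $\psi\in\ell_{0,1}\subset\ell_0$ has a limit $\psi_\infty$ at infinity and, bounded sets being finite, $d(\bar x,z_i)\to\infty$ as $|i|\to\infty$. This orbit/telescoping argument is the only genuinely new ingredient and the main obstacle, and it is exactly here that local boundedness of $L$---hence the restriction to local solutions---is indispensable.

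Granting this, $H(\KK)=\KK$ gives $\liminf_{K}\sum_{x\in K}(c_{xF(x)}-c_{xG(x)})=\liminf_{K}\sum_{y\in K}(b_{y\,\mathcal{T}(F)(y)}-b_{y\,\mathcal{T}(G)(y)})$, so that Condition~\eqref{eq1.3} (resp.\ the strict Condition~\eqref{eq1.4}) holds for $F$ and $c$ if and only if it holds for $\mathcal{T}(F)$ and $b$; since $\mathcal{T}$ is a bijection of the set of bijections of $X$ preserving the equivalence $\sim$ (as noted in the proof of Proposition~\ref{prop1.1.ii}), it carries the class of $F$ onto the class of $\mathcal{T}(F)$, and hence $F$ is a local (resp.\ local strong) solution for $c$ if and only if $\mathcal{T}(F)$ is one for $b$. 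The remaining refinements are routine transfers exactly as in Proposition~\ref{prop1.1.ii}: local boundedness is preserved because $H,K$ are locally bounded (Properties~\ref{pty-group} and~\ref{pty-lbinv}); and rewriting $b_{x\,\mathcal{T}(F)(x)}=(c_{\cdot\,F(\cdot)})\circ H^{-1}+\phi\circ H^{-1}+\psi\circ F\circ H^{-1}$ and invoking the invariance of $\ell_0,\ell_\infty$ under arbitrary bijections and of $\ell_{0,1}$ under locally bounded ones (Property~\ref{pty-lbinv}), together with $\ell_{0,1}\subset\ell_0\subset\ell_\infty$, shows that the $\ell_0$-, $\ell_\infty$- and locally bounded $\ell_{0,1}$-bijection properties all pass through $\mathcal{T}$ (the last one requiring $F$ locally bounded so that $F\circ H^{-1}$ is too). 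Finally I would remark that $\ell_1$ is deliberately missing from the list: the correction terms $\phi\circ H^{-1}$ and $\psi\circ F\circ H^{-1}$ lie in $\ell_{0,1}$ but not in $\ell_1$ in general, so the $\ell_1$-bijection property is not invariant under $\ell_{0,1}$-similarity.
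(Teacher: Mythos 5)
Your proof is correct, and at the crux it takes a genuinely different route from the paper's. The reduction is the same: via \eqref{eq1.6} and $\mathcal{T}(G)=K\circ G\circ H^{-1}$ everything hinges on showing that $s_K=\sum_{x\in K}\bigl((\psi\circ G)_x-(\psi\circ F)_x\bigr)$ tends to $0$ when $\rho(F,G)<\infty$ and $\psi\in\ell_{0,1}$, and your transfer of the refinements (local boundedness, strong, $\ell_0$-, $\ell_\infty$- and locally bounded $\ell_{0,1}$-bijections), including the correct observation that $\ell_1$-bijections must be dropped because the correction terms $\phi\circ H^{-1}$ and $\psi\circ F\circ H^{-1}$ lie only in $\ell_{0,1}$, matches the paper. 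For the crux, the paper argues in two steps: it first proves that the net $(s_K)_{K\in\KK}$ has a limit, via the Cauchy-type estimate \eqref{bkconv2}, and then forces $\liminf_{K\in\KK}|s_K|=0$ along the explicitly constructed increasing sequence $K_0=K$, $K_{n+1}=K_n\cup G(K_n)\cup G^{-1}(K_n)$, bounding $|s_{K_n}|$ by $\#K$ times the maxima of $|\psi|$ and $|\psi\circ G|$ on the disjoint layers $K_{n+1}\setminus K_n$, which eventually leave every finite set. You instead evaluate the sum exactly: absolute summability of $(\psi_{L(z)}-\psi_z)_{z\in X}$ for $L=G\circ F^{-1}$ (again from $\|\psi\|_{0,1,\rho(L,I)}$, which is precisely where local boundedness of $L$, hence the restriction to local solutions, enters) makes the net of finite partial sums converge unconditionally, and the orbit decomposition of $L$ with telescoping --- finite orbits cancel exactly; on an infinite orbit the symmetric partial sums $\psi_{z_{N+1}}-\psi_{z_{-N}}$ tend to $\psi_\infty-\psi_\infty=0$ because the pairwise distinct orbit points eventually leave every finite set and $\psi\in\ell_0$ --- identifies the total as $0$. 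This collapses the paper's two steps into one and even spares its preliminary normalisation to $\lim_{x\to\infty}\psi_x=0$, since telescoping cancels the limit value automatically; the only step you should make explicit is the standard fact that for an absolutely summable family the net of partial sums over finite sets converges to the unconditional sum, as this is the hinge between ``the total sum vanishes'' and ``$\lim_{K\in\KK}s_K=0$'' once $F(\KK)=\KK$ is used. What the paper's heavier boundary-layer technique buys is robustness: it is exactly the argument that survives in Proposition~\ref{prop1.1.v}, where $\psi$ is merely in $\ell_0$, absolute summability is unavailable, and convergence must be proved along the fixed exhaustion by the balls $B_n$ under the bounded-growth hypothesis --- your orbit argument has no purchase in that setting, whereas for the present proposition it is the cleaner proof.
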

\begin{proof}
In view of the arguments of the proof of Proposition~\ref{prop1.1.ii}
it is enough to show  that 
\begin{equation}\label{def-sK}
 s_K= \sum_{x\in K} ((\psi\circ G)_{x}-(\psi\circ F)_x) \end{equation}
has a zero limit, $\lim_{K\in\KK} s_K=0$,
whenever $F$ and $G$ are bijections $X\to X$ that are at a finite distance
from each other, and $\psi\in\ell_{0,1}$. 
Since $\ell_{0,1}\subset \ell_0$,
any constant function is in $\ell_{0,1}$, and $s_K$ is invariant
when adding a constant to $\psi$, it suffices to consider
the case of functions $\psi\in\ell_{0,1}$ 
such that $\lim_{x\to\infty}\psi_x=0$.
Moreover, since 
$
s_K= \sum_{x\in F(K)} ((\psi\circ G\circ F^{-1})_{x}-\psi_x)$, 
$F(\KK)=\KK$ and $\rho(G\circ F^{-1},I)\leq \rho(G,F)<+\infty$,
we may assume that $F=I$ and that $G$ is locally bounded.

Let $M=\rho(G,I)<+\infty$, we get that
\begin{equation}\label{bkconv}
 \sum_{x\in X}|(\psi\circ G)_{x}-\psi_x|\leq  \|\psi\|_{0,1,M}<+\infty
\end{equation}
since $\psi\in\ell_{0,1}$.
Hence, the sequence $((\psi\circ G)_{x}-\psi_x)_{x\in X}$ is in 
$\ell_1$ which implies that $s_K$ is bounded, and,
by properties of series with positive terms, we get that
\begin{equation}\label{bkconv2}
 \limsup_{K\in \KK}\sum_{x\not\in K}|(\psi\circ G)_{x}-\psi_x|
= \inf_{K\in\KK } \sum_{x\not\in K}|(\psi\circ G)_{x}-\psi_x|=0\enspace .
\end{equation}
Hence $s_K$ has a limit.
Indeed, for any finite subsets $K_1$ and $K_2$ of $X$, we have 
\[
|s_{K_1}-s_{K_2}|\leq |s_{K_1}-s_{K_1\cap K_2}|+|s_{K_2}-s_{K_1\cap K_2}|
\leq 2  \sum_{x\not\in (K_1\cap K_2)}|(\psi\circ G)_{x}-\psi_x|\enspace,
\]
which implies that 
\begin{align*}
0\leq \limsup_{K\in\KK} s_K-\liminf_{K\in\KK} s_K
&=\inf_{K_1, K_2\in\KK} \sup_{K'_1\supset K_1,\; K'_2\supset K_2}
s_{K'_1}-s_{K'_2}\\
&\leq 2 \inf_{K_1, K_2\in\KK} \sum_{x\not\in (K_1\cap K_2)}|(\psi\circ G)_{x}-\psi_x|=0\enspace.
\end{align*}
To show that $s_K$ has a zero limit, it is thus sufficient to prove that
$\liminf_{K\in\KK} |s_K|=0$. Since this property means that
for all finite sets $K$, $\inf_{K'\supset K} |s_{K'}|=0$,
it will hold as soon as for any finite set $K$, there exists 
a sequence of finite sets $(K_n)_{n\geq 0}$ containing $K$ such that
$\lim_{n\to \infty}s_{K_n}=0$. 

Let us show this last property. 
Consider the  sequence $K_n$ such that
$K_0=K$ and $K_{n+1}=K_n\cup G(K_n)\cup G^{-1}(K_n)$ for $n\geq 0$.
Then $K_n$ is nondecreasing, and it satisfies $K_n\supset K$,
$G(K_n)\subset K_{n+1}$ and $G^{-1}(K_n)\subset K_{n+1}$.
We have 
\begin{align*}
 s_{K_n}&=\sum_{x\in G(K_n)} \psi_{x}-\sum_{x\in K_n} \psi_x
=\sum_{x\in G(K_n)\setminus K_n} \psi_x -\sum_{x\in K_n\setminus G(K_n)} \psi_x\\
&= \sum_{x\in G(K_n)\setminus K_n} \psi_x -\sum_{x\in G^{-1}(K_n)\setminus K_n} \psi_{G(x)}\enspace .\end{align*}
Since $G(K_n)\setminus K_n\subset K_{n+1}\setminus K_n$,
$G(K_n)\setminus K_n\subset G^{n+1}(K)$, hence 
its cardinality is less or equal to the cardinality $\#K$ of $K$, and 
the same is true for $G^{-1}(K_n)\setminus K_n$, we obtain
\begin{equation}\label{bound-bKn}
|s_{K_n}| \leq \#K \left(\max_{x\in K_{n+1}\setminus K_n} |\psi_x| + 
 \max_{x\in K_{n+1}\setminus K_n} |(\psi\circ G)_{x}|\right)\enspace. \end{equation}
Now the sets $K_{n+1}\setminus K_n$ are disjoint.
If $K_{n+1}\setminus K_n=\emptyset$ for some $n\geq 0$, then
$K_{n+1}=K_n$ and by construction $K_{m}=K_n$, hence 
$K_{m+1}\setminus K_m=\emptyset$ for all $m\geq n$.
This implies that $|s_{K_n}|=0$ for all $n\geq m$, hence the sequence
$(s_{K_n})_{n\geq 0}$  converges trivially to $0$.
Otherwise, if all the sets $K_{n+1}\setminus K_n$ are nonempty,
one can show, using the fact that they are all disjoint, 
that for all finite sets $K'$, 
$K_{n+1}\setminus K_n\subset X\setminus K'$ for $n$ large enough.
Since $\lim_{x\to\infty} \psi_x=0$, we deduce that 
$\max_{x\in K_{n+1}\setminus K_n} |\psi_x|$ tends to $0$.
Since the same is true for  $\psi\circ G$ instead of $\psi$, 
Inequality~\eqref{bound-bKn} implies that  the sequence
$(s_{K_n})_{n\geq 0}$ converges to $0$.
This concludes the proof.
\end{proof}

From the previous proof, it seems that with Definition~\ref{defi-assign}
of a solution to the assignment problem, the invariance by similarities fails 
under weaker assumptions on similarities,
in particular for $\ell_0$ and $\ell_\infty$-similarities.
This may hold however if we weaken also the definition of a solution
to the assignment problem as follows.
In the sequel, we fix a base point $\bar{x}$ and denote by
$B_n$ the ball of centre $\bar{x}$ and radius $n$ in $X$.

\begin{definition}\label{defi-assign-bis}
 A bijection $F : X \to X$
 is a (global) \NEW{restricted solution}, resp.\  a  \NEW{strong restricted
solution}, of the \NEW{assignment problem}
associated to the kernel $b:X\times X\to\rmax$ if
 \begin{equation}
\label{eq1.3bis}
\liminf_{n\to \infty} \sum_{x\in B_n} (b_{xF(x)}- b_{xG(x)})\ge 0\enspace ,
\end{equation}
resp.\ if 
\begin{equation}
  \label{eq1.4bis}
\liminf_{n\to \infty} \sum_{x\in B_n} (b_{xF(x)}- b_{xG(x)})> 0\enspace ,
\end{equation}
for any other bijection $G: X\to X$. We say that 
$F$ is a \NEW{local} (resp.\ \NEW{local strong}) \NEW{restricted solution},
if \eqref{eq1.3bis} (resp.\ \eqref{eq1.4bis}) holds for all 
$G$ within a finite distance from $F$.
\end{definition}

With this definition, we cannot change the ``order'' of rows of a matrix,
that is we need to consider right-similarities only.
From the same arguments as in the proofs of Propositions~\ref{prop1.1.ii}
and~\ref{prop1.1.iii}, we get that
\begin{proposition}\label{prop1.1.iv}
The conclusions of Propositions~\ref{prop1.1.ii} and~\ref{prop1.1.iii}
hold true if we replace ``solutions'' by ``restricted solutions'' and ``similarities'' by ``right-similarities'' in their statements.
\end{proposition}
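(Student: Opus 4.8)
The plan is to rerun the computations of Propositions~\ref{prop1.1.ii} and~\ref{prop1.1.iii} with the balls $B_n$ playing the role of the generic compact set $K\in\KK$, and to explain why forcing $H=I$ (i.e.\ restricting to right-similarities) is exactly what is needed. Taking $H=I$ in~\eqref{eq1.5} gives $c_{xy}=b_{xK(y)}-\phi_x-\psi_y$ with $K$ locally bounded and $\phi,\psi$ in the relevant space, and for any two bijections $F,G$ the right-similar analogue of~\eqref{eq1.6} reads
\[
\sum_{x\in B_n}(c_{xF(x)}-c_{xG(x)})=\sum_{x\in B_n}\bigl(b_{x(K\circ F)(x)}-b_{x(K\circ G)(x)}\bigr)+s_{B_n},
\]
where $s_{B_n}=\sum_{x\in B_n}((\psi\circ G)_x-(\psi\circ F)_x)$. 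The decisive point is that, because $H=I$, the index set on both sides is the \emph{same} ball $B_n$; had we allowed a nontrivial $H$ the left-hand sum would turn into a sum over $H(B_n)$, which is not a ball and for which the $\liminf_{n\to\infty}$ of Definition~\ref{defi-assign-bis} carries no meaning. Since $K$ is locally bounded, the map $\mathcal T(G)=K\circ G$ is a bijection of the set of bijections onto itself preserving both $\sim$ and local boundedness, so the whole statement reduces to showing $\lim_{n\to\infty}s_{B_n}=0$.

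For the $\ell_1$-case (the analogue of Proposition~\ref{prop1.1.ii}) I would argue directly for arbitrary bijections $F,G$. Writing $s_{B_n}=\sum_{y\in G(B_n)}\psi_y-\sum_{y\in F(B_n)}\psi_y$ and noting that, $B_n$ exhausting $X$ and $F,G$ being bijections, the indicators $\mathbf 1_{F(B_n)}$ and $\mathbf 1_{G(B_n)}$ converge pointwise to $\mathbf 1_X$, dominated convergence against $|\psi|\in\ell_1$ shows that each of the two sums tends to $\sum_{y\in X}\psi_y$, whence $s_{B_n}\to0$.

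The main obstacle is the $\ell_{0,1}$-case (the analogue of Proposition~\ref{prop1.1.iii}), which only concerns local solutions, so $F\sim G$. Here the reduction to $F=I$ used in the proof of Proposition~\ref{prop1.1.iii} relied on $F(\KK)=\KK$, and this has no ball analogue since $F(B_n)$ need not be a ball. I would instead set $J=G\circ F^{-1}$, which satisfies $\rho(J,I)=\rho(G,F)=:M<\infty$ and is therefore locally bounded, and put $\eta:=\psi\circ J-\psi$; by the very definition of $\ell_{0,1}$ one has $\|\eta\|_1\le\|\psi\|_{0,1,M}<\infty$, so $\eta\in\ell_1$. A change of variable turns $s_{B_n}$ into $\sum_{x\in B_n}\eta_{F(x)}=\sum_{y\in F(B_n)}\eta_y$, and, $\eta$ being in $\ell_1$ and $F(B_n)$ exhausting $X$, dominated convergence gives $\lim_{n\to\infty}s_{B_n}=\sum_{y\in X}\eta_y$. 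It then remains to see that this unconditional sum vanishes, and here I would invoke Proposition~\ref{prop1.1.iii} itself, applied to the pair $F=I$, $G=J$ (legitimate since $I\sim J$): it yields $\lim_{K\in\KK}\sum_{x\in K}\eta_x=0$, and for an absolutely summable $\eta$ this net limit over $\KK$ equals $\sum_{y\in X}\eta_y$; hence $\sum_{y\in X}\eta_y=0$ and $s_{B_n}\to0$.

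Finally, feeding $\lim_{n\to\infty}s_{B_n}=0$ back into the displayed identity shows that $\liminf_{n\to\infty}\sum_{x\in B_n}(c_{xF(x)}-c_{xG(x)})$ and $\liminf_{n\to\infty}\sum_{x\in B_n}(b_{x(K\circ F)(x)}-b_{x(K\circ G)(x)})$ coincide; since $\mathcal T$ is a bijection of bijections carrying each $\sim$-class onto a $\sim$-class, this gives the asserted equivalence between (local) restricted solutions for $c$ and for $b$, the strict version~\eqref{eq1.4bis} transferring because adding a null sequence leaves a strict $\liminf>0$ unchanged. The locally bounded, strong and $\lb$-bijection refinements then follow exactly as in Propositions~\ref{prop1.1.ii} and~\ref{prop1.1.iii}, the $\lb$-bijection ones being properties of the single sequence $(c_{xF(x)})_x$ and hence insensitive to the choice of summation sets. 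I expect the $\ell_{0,1}$ step, namely replacing the $F$-invariance of $\KK$ by the absolute-summability argument for $\eta$, to be the only genuinely new ingredient.
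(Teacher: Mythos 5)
Correct, and essentially the paper's intended argument: the paper proves Proposition~\ref{prop1.1.iv} only by pointing back to the proofs of Propositions~\ref{prop1.1.ii} and~\ref{prop1.1.iii}, and your proposal is precisely that rerun with $H=I$ and with the balls $B_n$ in place of the generic $K\in\KK$, including the correct identification of why a nontrivial $H$ would break the $\liminf_{n\to\infty}$ of Definition~\ref{defi-assign-bis}. Your one genuinely new step --- replacing the reduction via $F(\KK)=\KK$, which has no ball analogue, by the observation that $\eta=\psi\circ(G\circ F^{-1})-\psi$ lies in $\ell_1$ (with $\|\eta\|_1\le\|\psi\|_{0,1,M}$), so that $s_{B_n}=\sum_{y\in F(B_n)}\eta_y$ converges to the unconditional sum $\sum_{y\in X}\eta_y$, which equals the net limit over $\KK$ and hence vanishes by the zero-limit claim established in the proof of Proposition~\ref{prop1.1.iii} --- correctly supplies the only detail the paper leaves implicit.
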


Moreover, we can consider $\ell_0$-right-similarities.

\begin{proposition}\label{prop1.1.v}
Assume that $\# B_n-\# B_{n-1}$ is bounded.
Then, the property for a kernel to have a locally bounded
local restricted solution to
the assignment problem is invariant under $\ell_{0}$-right-similarity.
The same is true if the solution is required in addition to be
strong, or either a $\ell_0$ or $\ell_\infty$-bijection,
with respect to the kernel.
\end{proposition}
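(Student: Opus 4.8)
The plan is to adapt the scheme of Propositions~\ref{prop1.1.ii} and~\ref{prop1.1.iii} to right-similarity. Writing the $\ell_0$-right-similarity as $c_{xy}=b_{xK(y)}-\phi_x-\psi_y$ with $K$ a locally bounded bijection and $\phi,\psi\in\ell_0(X)$, the $\phi$-term cancels in differences and one obtains, for all bijections $F,G:X\to X$ and all $n$,
\[
\sum_{x\in B_n}(c_{xF(x)}-c_{xG(x)})=\sum_{x\in B_n}(b_{x(K\circ F)(x)}-b_{x(K\circ G)(x)})+s_{B_n},\quad s_{B_n}:=\sum_{x\in B_n}\big((\psi\circ G)_x-(\psi\circ F)_x\big).
\]
As in the earlier proofs, the transformation $\mathcal T(G):=K\circ G$ is a bijection of the set of (locally bounded) bijections onto itself preserving the relation $\sim$. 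Hence, once we show that $\lim_n s_{B_n}=0$ whenever $F$ is locally bounded and $\rho(G,F)<\infty$, passing to the $\liminf$ in the displayed identity proves that $F$ is a (strong) local restricted solution for $c$ if and only if $K\circ F$ is one for $b$, and that local boundedness is preserved. The $\ell_0$- and $\ell_\infty$-bijection assertions then follow exactly as in Proposition~\ref{prop1.1.ii}: from $b_{x(K\circ F)(x)}=c_{xF(x)}+\phi_x+(\psi\circ F)_x$, using that $\phi,\psi\in\ell_0\subset\ell_\infty$, that $\psi\circ F\in\ell_0$ by Property~\ref{pty-lbinv}, and the linearity of $\ell_0$ and $\ell_\infty$.

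Thus the whole difficulty lies in establishing $\lim_n s_{B_n}=0$, and this is where the hypothesis that $\#B_n-\#B_{n-1}$ is bounded, together with the restriction to the balls $B_n$, is used. Here I cannot follow Proposition~\ref{prop1.1.iii}: since $F(B_n)\neq B_n$ in general there is no reduction to $F=I$, and since $\psi$ lies only in $\ell_0$ (not $\ell_{0,1}$) the increments $(\psi\circ G)_x-\psi_x$ need not be absolutely summable. Instead I would argue directly. Subtracting from $\psi$ its finite limit at infinity leaves $s_{B_n}$ unchanged, so we may assume $\psi_x\to 0$. Using that $F,G$ are bijections and changing variables,
\[
s_{B_n}=\sum_{y\in G(B_n)}\psi_y-\sum_{y\in F(B_n)}\psi_y=\sum_{y\in G(B_n)\setminus F(B_n)}\psi_y-\sum_{y\in F(B_n)\setminus G(B_n)}\psi_y,
\]
so that only the symmetric difference $G(B_n)\,\triangle\,F(B_n)$ contributes.

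The key geometric estimate is then the following. Let $m=\lceil\max(\rho(F,I),\rho(G,I))\rceil$, which is finite because $F$ is locally bounded and $\rho(G,F)<\infty$. Since $\rho(F^{-1},I)=\rho(F,I)$, one checks that $B_{n-m}\subset F(B_n)\subset B_{n+m}$, and likewise for $G$, whence $G(B_n)\,\triangle\,F(B_n)\subset B_{n+m}\setminus B_{n-m}$. The bounded-increment hypothesis, say $\#B_k-\#B_{k-1}\leq C$ for all $k$, then yields, for $n\geq m$,
\[
\#(B_{n+m}\setminus B_{n-m})=\#B_{n+m}-\#B_{n-m}=\sum_{k=n-m+1}^{n+m}(\#B_k-\#B_{k-1})\leq 2mC,
\]
a bound independent of $n$. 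Since every $y$ in this annulus satisfies $d(y)>n-m$ and $\psi_y\to 0$ at infinity, we get
\[
|s_{B_n}|\leq\sum_{y\in G(B_n)\,\triangle\,F(B_n)}|\psi_y|\leq 2mC\cdot\sup\set{|\psi_y|}{d(y)>n-m},
\]
which tends to $0$ as $n\to\infty$. I expect this last estimate to be the main obstacle: the cancellation exploited in the $\ell_{0,1}$ case is unavailable, and one must instead observe that the bounded-increment hypothesis keeps the symmetric difference of the images of consecutive balls of bounded cardinality while pushing it to infinity, where $\psi$ vanishes. Everything else — the cancellation of $\phi$, the transfer of the (strong, locally bounded, local) restricted solution property through $\mathcal T$, and the $\ell_0$/$\ell_\infty$-bijection bookkeeping — is a routine adaptation of Propositions~\ref{prop1.1.ii} and~\ref{prop1.1.iii}.
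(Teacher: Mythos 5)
Your proof is correct and takes essentially the same route as the paper's: both rest on the key observation that the bounded-increment hypothesis $\#B_k-\#B_{k-1}\leq C$ confines the non-cancelling terms of $s_{B_n}$ to a set of uniformly bounded cardinality located outside $B_{n-m}$, where $\psi$ (after subtracting its limit) is uniformly small. The only cosmetic difference is that the paper does reduce to $F=I$ — not by the change of variables of Proposition~\ref{prop1.1.iii}, which indeed fails for balls, but by the additive splitting $s_{B_n}=\sum_{x\in B_n}(\psi_{G(x)}-\psi_x)-\sum_{x\in B_n}(\psi_{F(x)}-\psi_x)$, valid since both $F$ and $G$ are locally bounded, and then bounds $G(B_n)\setminus B_n$ and $G^{-1}(B_n)\setminus B_n$ — whereas you keep a general $F$ and bound the symmetric difference $F(B_n)\,\triangle\,G(B_n)$ inside the annulus $B_{n+m}\setminus B_{n-m}$, which amounts to the same estimate.
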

\begin{proof}
In view of the arguments of the proof of Propositions~\ref{prop1.1.ii}
and~\ref{prop1.1.iii},
it is enough to show  that $s_{B_n}$, defined by~\eqref{def-sK},
converges to $0$ when $n$ goes
to infinity, whenever $F$ and $G$ are locally bounded bijections
$X\to X$, and $\psi\in\ell_{0}$ has a zero limit. 
Moreover, taking the difference of $\psi_{G(x)}$ and $\psi_{F(x)}$ 
with $\psi_x$ in the expression of $s_{B_n}$, we may assume that $F=I$.
Then by the same arguments as in the proof of  Proposition~\ref{prop1.1.iii}
we get that 
$
 s_{B_n}=\sum_{x\in G(B_n)\setminus B_n} \psi_x -\sum_{x\in G^{-1}(B_n)\setminus B_n} \psi_{G(x)}$. 
Since $R:=\rho(G,I)<+\infty$, we get that
$G(B_n)\subset B_{n+R}$ and $G^{-1}(B_n)\subset B_{n+R}$, and 
by the assumption on the cardinality of $B_n$, 
we get that the cardinality of $G(B_n)\setminus B_n$ is bounded 
by some constant $M$.
Hence
\begin{equation}\label{bound-bKnbis}
|s_{B_n}| \leq M \left(\max_{x\in B_{n+R}\setminus B_n} |\psi_x| + 
 \max_{x\in B_{n+R}\setminus B_n} |(\psi\circ G)_{x}|\right)\enspace. \end{equation}
Since $\psi_x$ and $\psi_{G(x)}$ tend to $0$ when $x\to\infty$,
the r.h.s.\ of~\eqref{bound-bKnbis} tends to $0$, which 
implies that the sequence $(s_{B_n})_{n\geq 0}$ converges to $0$.
This concludes the proof.
\end{proof}

\section{Main results}\label{sec-form}
In this section, we state the main results, which we prove in Sections~\ref{sec2} and~\ref{sec3}.
\begin{theorem}\label{th1} A kernel satisfying \AC\ is
$\lb$-strongly regular if and only if it is $\lb$-similar to a
strongly normal kernel or if and only if it is $\lb$-right 
(resp.\ left)-similar to a strongly normal kernel.
\end{theorem}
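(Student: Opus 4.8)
The plan is to prove the chain of equivalences through one genuinely substantial implication, the others being formal. Right- and left-$\lb$-similarity are special cases of $\lb$-similarity, and $\lb$-strong regularity is invariant under $\lb$-similarity by Proposition~\ref{prop1.1.i}; hence it suffices to show (A) that every strongly normal kernel is $\lb$-strongly regular, and (B) that every $\lb$-strongly regular kernel is at once $\lb$-right-similar and $\lb$-left-similar to a strongly normal kernel. For (A) I would test Definition~\ref{defi-strg-reg} on a strongly normal kernel $c$ with $g=\zero$: since $c_{yy}=0$ and $c_{xy}<0$ for $x\neq y$, one reads off $f:=C^{T}\zero=\zero\in\lb$, which is~(i). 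If $h\in\lb$ satisfies $Ch=\zero$, the diagonal term gives $h_x\ge 0$ for all $x$; then $c_{xy}-h_y\le c_{xy}<0$ for $y\neq x$, and by \ATC\ the supremum over such $y$ is attained and strictly negative; since the diagonal contributes $-h_x\le 0$, the identity $(Ch)_x=0$ forces $h_x=0$. Thus $h=\zero$, giving~(ii), and~(iii) follows since the transpose of a strongly normal kernel is strongly normal. Together with invariance, (A) yields every ``similar-to-strongly-normal'' implication.

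For (B), let $g\in\lb$ and $f=B^{T}g\in\lb$ witness strong regularity, so $Bf=g$ and $B^{T}g=f$. As $g,f\in\lb\subset\ell_\infty$, any tight pair satisfies $b_{xy}=g_x+f_y\ge -\|g\|_\infty-\|f\|_\infty$, so by \ATC\ all tight pairs lie in a fixed band $d(x,y)\le n_0$; in particular the subdifferentials $S_x:=\set{y}{b_{xy}=g_x+f_y}$ and $T_y:=\set{x}{b_{xy}=g_x+f_y}$ are finite and nonempty, the defining suprema being attained. Reading the uniqueness characterisation of~\cite{AGK1,AGK2} through these finite subdifferentials translates the two uniqueness conditions into combinatorial form: (ii) becomes ``$\forall y\ \exists x:\ S_x=\{y\}$'' and (iii) becomes ``$\forall x\ \exists y:\ T_y=\{x\}$''. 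Choosing witnesses defines injections $\xi,\eta:X\to X$ with $S_{\xi(y)}=\{y\}$ and $T_{\eta(x)}=\{x\}$.

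The heart of the argument is that $\xi$ and $\eta$ are mutually inverse bijections. Indeed, fixing $x$ and applying (ii) to the column $\eta(x)$, the row $\xi(\eta(x))$ is tight with $\eta(x)$ and therefore lies in $T_{\eta(x)}=\{x\}$, so $\xi(\eta(x))=x$; the symmetric computation gives $\eta\circ\xi=\mathrm{id}$. Consequently every row equals some $\xi(y)$, so $S_x=\{\eta(x)\}$ for all $x$ and, dually, $T_y=\{\xi(y)\}$ for all $y$: the tight relation is exactly the graph of the bijection $F:=\eta$, and the band estimate gives $d(x,F(x))\le n_0$, i.e. $F$ is locally bounded. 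Setting
\[
c_{xy}=b_{x F(y)}-g_x-f_{F(y)}
\]
exhibits a right-$\lb$-similar kernel, with $\phi=g$ and $\psi=f\circ F$ lying in $\lb$ by Property~\ref{pty-lbinv} (here local boundedness of $F$ matters precisely when $\lb=\ell_{0,1}$); it is strongly normal because $c_{xx}=0$ as $F(x)\in S_x$, whereas $F(y)\notin S_x=\{F(x)\}$ for $y\neq x$ forces $c_{xy}<0$. The companion choice $c_{xy}=b_{F^{-1}(x)y}-g_{F^{-1}(x)}-f_y$ (using that $F^{-1}$ is locally bounded, Property~\ref{pty-group}) yields a left-$\lb$-similar strongly normal kernel, completing~(B).

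I expect the mutual-inverse step to be the main obstacle, and it is exactly where the countable setting departs from the finite one. In finite dimension condition (ii) already forces a perfect matching, because an injection of a finite set into itself is surjective, so each $S_x$ is automatically a singleton and (iii) is redundant; over a countable $X$ this surjectivity can fail, and one genuinely needs (iii)---built into Definition~\ref{defi-strg-reg} for this reason---to force $\xi$ and $\eta$ to be inverse to each other. The tightness hypothesis \ATC\ is equally essential throughout: it makes the suprema attained (so the $S_x$ are bona fide finite subdifferentials on which the covering characterisation applies) and it makes the extracted bijection $F$ locally bounded, which is what keeps the conjugating functions $\phi,\psi$ inside $\lb$.
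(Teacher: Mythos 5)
Your proof is correct and takes essentially the same route as the paper: you handle the ``if'' direction by verifying strong regularity of a strongly normal kernel at $f=g=0$, and the ``only if'' direction by converting the two uniqueness hypotheses, via the minimal-covering/subdifferential criterion, into a locally bounded bijection $F$ whose graph is exactly the tight relation $b_{xy}=g_x+f_y$ (this is precisely the paper's Proposition~\ref{prop2.4}, including the same tightness/band argument for local boundedness and the identical conjugation $c_{xy}=b_{xF(y)}-g_x-f_{F(y)}$, your mutual-inverse argument for $\xi,\eta$ being a repackaging of the paper's injectivity-plus-surjectivity step). The one point to note is that the characterisation you quote from~\cite{AGK1,AGK2} is stated for uniqueness in $\RBX$, so its ``only if'' half for uniqueness restricted to the subspace $\lb$ needs the short perturbation argument (replacing $B^Tg$ by $B^Tg+\delta_{y_0}\in\lb$ when $y_0$ is inessential) that the paper supplies in Proposition~\ref{prop2.3}.
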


The following counter-example shows that the tightness condition~\ATC\ is useful in the previous result.

\begin{ex}
Consider $X=\N$ and $b_{xy}=-1/|x-y|$ for $x\neq y$ and $b_{xx}=0$.
The kernel $b$ is clearly strongly normal. It satisfies Condition~\ANZ, but not
Condition~\ATC.
Let $f,g\in \ell_1(X)$ be such that $Bf=g$ and $B^Tg=f$.
We get that $g_x\geq \lim_{y\to\infty} b_{xy}-f_y=0$ and
symmetrically $f_y\geq 0$.
This implies that $g_x=\sup_{y} b_{xy}-f_y\leq 0$ and 
$f_y=\sup_{x} b_{xy}-g_x\leq 0$. Hence $f=g\equiv 0$.
However, the function $h\in\R^X$ such that
$h_x=1/(x+1)^2$ satisfies $Bh=g$ and $h\in \ell_1$ but $h\neq f$.
Hence $b$  is not $\ell_1$-strongly regular, 
thus, by Proposition~\ref{prop1.1.i},
it cannot be $\ell_1$-similar to a $\ell_1$-strongly regular kernel.
\end{ex}

Theorem~\ref{th1} shows in particular that a $\lb$-strongly regular kernel 
satisfying \AC\ is $\lb$-similar to a kernel having a strong solution to 
the assignment problem.
But it is of course interesting to know what can be said about the
assignment problem for the regular kernel itself.
In the analysis of this question (as well as
the inverse one),  an important role is played by the following
construction.

%

If $c: (x,y)\in X\times X \mapsto c_{xy}\in \rmax$ is a kernel,
we define the kernel $c^+: (x,y)\in X\times X \mapsto c^+_{xy}\in \rbar$,
\begin{align}\label{eq-def-cplus}
c^+_{xy}= \sup_{x_0,x_1,\ldots,x_n}c_{x_0x_1}+\cdots+c_{x_{n-1}x_n} \enspace, 
\end{align}
where the sup is taken over $n\geq 1$ and over all the sequences $x_0,x_1,\ldots,x_n$ of
elements of $X$ such that $x_0=x$ and $x_n=y$. 
The sum
$c_{x_0x_1}+\cdots+c_{x_{n-1}x_n}$ is the \NEW{weight} of the sequence
$x_0,\ldots,x_n$, so that $c^+_{xy}$ represents the maximal weight
of a path of positive length from $x$ to $y$. 

The sequence
$x_0,\ldots,x_n$ is said to
be a \NEW{circuit} if $x_0=x_n$. If every circuit has a nonpositive weight,
the supremum in~\eqref{eq-def-cplus} does not change if one restricts it to
those sequences such that the elements $x_1,\ldots,x_{n-1}$ are
pairwise distinct and are distinct from $x_0$ and $x_n$. Note however
that unlike in the case in which $X$ is finite, the fact that
every circuit has a nonpositive weight does not 
imply that $c^+_{xy}<\infty$ for all $x,y\in X$,
although this turns out to be automatically the case when
$c$ is \NEW{irreducible}, meaning that $c^+_{xy}>-\infty$
for all $x,y\in X$, see~\cite{AGW} for more details. 

It follows readily from the definition that 
$c^+_{xy}\geq c^+_{xz}+c^+_{zy}$. 
Let us now consider the vector $f_x:=c^+_{xy}$, for some arbitrary $y\in X$.
We deduce from the previous inequality that
$
f_x \geq \sup_{z} (c_{xz}+f_z) $, 
Moreover, when $c^+_{yy}\geq 0$, and a fortiori when $c_{yy}\geq 0$, it can be checked that the equality holds,
for all $x\in X$ (see for instance~\cite{AGW}). 

We shall now apply this construction to the kernel $c=\tilde{b}$
where
\begin{equation}\label{btilde}
\tilde{b}_{xy} = b_{xF(y)}-b_{yF(y)} \enspace,
\end{equation}
and  $F$ is a (possibly local) solution of the assignment problem 
associated to a kernel $b$.
The kernel $\tilde{b}^+$ is obtained by taking $c=\tilde{b}$ in
Equation~\eqref{eq-def-cplus}. Observe that $\tilde{b}_{xx}=0$ and
that the weight of any circuit, with respect to $\tilde{b}$, is
non positive.

As was observed in \cite{Ru}, the
functions $\tilde b_{xy}^+$ turn out to be useful also in the analysis
of the Monge-Kantorovich mass transfer problem, 
a natural analog of the assignment problem for general measurable,
(uncountable) state space $X$.

Define the \NEW{potential} and the \NEW{inverse potential} as the functions on $X$ given respectively by
 \begin{equation}
  \label{eq1.8}
\bar\phi_x=\sup_{y} \tilde b_{xy}^+\in\R\cup\{+\infty\},
 \qquad \bar\psi_y=\sup_{x} \tilde
b_{xy}^+\in\R\cup\{+\infty\} \enspace .
   \end{equation}
 The following simple
  properties of these functions are crucial:
  \begin{myenumerate}
\item  $\tilde b^+_{xx}$, $\bar\phi_x$ and $\bar \psi_y$ are nonnegative
 for all $x$ and $y$ (in fact, take $n=1$ in~\eqref{eq-def-cplus}).
\item  the function $f=\bar\phi$ satisfies the equation
 \begin{equation}
  \label{eq1.9}
  f_x=\sup_{y} (\tilde{b}_{xy}+f_y )\enspace, 
    \quad \forall x\in X.
   \end{equation}
Similarly, the function $g=\bar\psi$ satisfies the equation
 \begin{equation}
  \label{eq1.9bis}
  g_y=\sup_{x} (\tilde{b}_{xy}+g_x) \enspace. 
\end{equation}
Moreover,  if $\bar\psi$ (resp.\ $\bar\phi$) is finite, the function $-\bar\psi$ (resp.\ $-\bar\phi$) also satisfies~\eqref{eq1.9} (resp.~\eqref{eq1.9bis}).
Observe that Equation~\eqref{eq1.9} can be equivalently written as
\begin{equation}
  \label{eq1.10}
  f=B\psi, \qquad \psi_y=b_{F^{-1}(y)y}-f_{F^{-1}(y)}     \quad \forall y\in X
\enspace .
   \end{equation}
   \item  The function $f=\bar \phi$ and, if $\bar \psi$ is finite,
the function  $f=-\bar \psi$ satisfy the equation
 \begin{equation}
  \label{eq1.11}
  f_x=\sup_{y} (\tilde b_{xy}^++f_y)\quad \forall x\in X\enspace.
   \end{equation}
\end{myenumerate}

\begin{remark}\label{normal-potential}
When $b$ is a  normal kernel, taking $F$ to be the identity in 
the definition of the kernel $\tilde{b}$,
we get $\tilde b=b$,  $\tilde b^+_{xy}\leq 0$ and
$\bar\phi_x=\bar \psi_y=0$ for all $x,y$.
\end{remark}

\begin{theorem}\label{th2}
 {\rm (i)} If a kernel $b$ satisfying \AC\  is
 $\ell_{0,1}$-strongly regular, then it has a locally
bounded strong  local $\ell_{0,1}$-solution 
to its assignment problem.
Moreover, if $F$ denotes this (necessarily unique) solution,
and if $\tilde b$ is defined from $F$ by~\eqref{btilde},
the kernel $\tilde b^+$ satisfies: 
\begin{equation}
  \label{eq1.12}
\limsup_{x,y\to \infty} \tilde b^+_{xy}\leq 0
\end{equation}
and the potentials $\bar\phi$ and $\bar\psi$ (defined in~\eqref{eq1.8})
are bounded functions.

{\rm  (ii)} If
$b$ is $\ell_1$-strongly regular, then $F$ is also a global
strong $\ell_1$-solution to the assignment problem associated to $b$.

{\rm (iii)}  Under the assumption that $\# B_n-\# B_{n-1}$ is bounded,
if $b$ is $\ell_{0}$-strongly regular, then it has a locally
bounded strong local restricted $\ell_{0}$-solution $F$
to the assignment problem associated to $b$, and 
the  kernel $\tilde b^+$ and potentials $\bar\phi$ and $\bar\psi$ 
satisfy the properties of Point {\rm (i)}.
\end{theorem}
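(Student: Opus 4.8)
The plan is to reduce all three statements to the strongly normal case using Theorem~\ref{th1}, and then to transport the trivial solution (the identity) together with its potentials back to $b$ through the similarity. First I would treat part (i). Since $b$ satisfies \AC\ and is $\ell_{0,1}$-strongly regular, Theorem~\ref{th1} furnishes a strongly normal kernel $c$ that is $\ell_{0,1}$-right-similar to $b$; I write $c_{xy}=b_{xK(y)}-\phi_x-\psi_y$ with $K$ a locally bounded bijection and $\phi,\psi\in\ell_{0,1}(X)$. For a strongly normal $c$ the identity is plainly a locally bounded strong local $\ell_{0,1}$-solution: its weight sequence $(c_{xx})_x\equiv 0$ lies in $\ell_{0,1}$, and any competing bijection loses strictly on the negative off-diagonal entries, so the strict inequality~\eqref{eq1.4} holds. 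Proposition~\ref{prop1.1.iii} (through the transformation $G\mapsto K\circ G$) then transports this property to $b$, giving the solution $F:=K$; its uniqueness among locally bounded strong local solutions follows from~\eqref{eq1.4}, since two such solutions lie at finite distance and would each strictly beat the other.

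The analytic heart is a single telescoping identity. Substituting $b_{xK(y)}=c_{xy}+\phi_x+\psi_y$ and $b_{yK(y)}=c_{yy}+\phi_y+\psi_y=\phi_y+\psi_y$ (here $c_{yy}=0$) into~\eqref{btilde}, I get
\begin{equation*}
\tilde b_{xy}=c_{xy}+\phi_x-\phi_y\enspace.
\end{equation*}
Along any path $x=x_0,\dots,x_n=y$ the $\phi$-terms telescope to $\phi_x-\phi_y$, which is independent of the path and so factors out of the supremum in~\eqref{eq-def-cplus}; hence $\tilde b^+_{xy}=c^+_{xy}+\phi_x-\phi_y$. Because $c$ is strongly normal, every path has nonpositive weight, so $c^+_{xy}\leq 0$ and $\tilde b^+_{xy}\leq\phi_x-\phi_y$. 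As $\phi\in\ell_{0,1}\subset\ell_0$ has a finite limit at infinity, $\phi_x-\phi_y\to 0$ as $x,y\to\infty$, which yields~\eqref{eq1.12}; and since $\phi\in\ell_\infty$ is bounded, the bounds $0\leq\bar\phi_x\leq\phi_x-\inf\phi$ and $0\leq\bar\psi_y\leq\sup\phi-\phi_y$ (coming from $c^+_{xx}=0$ and $c^+\leq 0$) show that $\bar\phi$ and $\bar\psi$ are bounded.

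Parts (ii) and (iii) follow the same scheme, changing only the space and the invariance tool. For (ii) I would note that $\ell_1$-strong regularity yields, via Theorem~\ref{th1}, an $\ell_1$-right-similarity to a strongly normal $c$ with $\phi,\psi\in\ell_1\subset\ell_{0,1}$; for such $c$ the identity is a \emph{global} strong $\ell_1$-solution, and Proposition~\ref{prop1.1.ii} transports this to $F=K$, which (being a locally bounded strong local $\ell_{0,1}$-solution) coincides with the solution of part~(i) by uniqueness. For (iii), under the hypothesis that $\#B_n-\#B_{n-1}$ is bounded, $\ell_0$-strong regularity gives an $\ell_0$-right-similarity to a strongly normal $c$ with $\phi,\psi\in\ell_0$, the identity is a locally bounded strong local restricted $\ell_0$-solution, and Proposition~\ref{prop1.1.v} transports it; the telescoping identity and the conclusions on $\tilde b^+,\bar\phi,\bar\psi$ are unchanged, since $\phi\in\ell_0$ still has a finite limit at infinity and is bounded. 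I expect the main difficulty to be bookkeeping rather than analysis: verifying that the \emph{same} bijection $F=K$ serves in all parts and that each invariance proposition applies to the exact combination of attributes required (locally bounded, strong, local/global, restricted, $\lb$-bijection). The genuinely analytic content, namely $\tilde b^+_{xy}=c^+_{xy}+\phi_x-\phi_y$ together with $c^+\leq 0$, is elementary.
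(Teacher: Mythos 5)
Your proof is correct, but it is organised differently from the paper's. The paper proves Theorem~\ref{th2} directly from Proposition~\ref{prop2.4}: the strict inequalities~\eqref{eq2.2} give $b_{xF(x)}-f_{F(x)}\geq b_{xG(x)}-f_{G(x)}$ (strict when $G(x)\neq F(x)$), the correction terms $\sum_{x}(f_{F(x)}-f_{G(x)})$ are shown to vanish in the limit by re-running the arguments of Propositions~\ref{prop1.1.ii}, \ref{prop1.1.iii} and~\ref{prop1.1.v}, and the potential bounds come from $\tilde b_{xy}\leq g_x-g_y$, hence $\tilde b^+_{xy}\leq g_x-g_y$ with $g\in\lb\subset\ell_0$ the strong-regularity witness. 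You instead factor everything through Theorem~\ref{th1}: reduce to a strongly normal kernel $c$ by right-similarity, observe that the identity is trivially a (global) strong solution there, transport it back with the invariance propositions quoted as black boxes, and replace the paper's inequality by the exact telescoping identity $\tilde b^+_{xy}=c^+_{xy}+\phi_x-\phi_y$ together with $c^+\leq 0$. The two proofs are in fact the same argument in different packaging: in the proof of Theorem~\ref{th1} the normal form is $c_{xy}=b_{xF(y)}-f_{F(y)}-g_x$, so your $\phi$ is the paper's $g$, your $K$ is the paper's Proposition~\ref{prop2.4} bijection, and your bound $\tilde b^+_{xy}\leq\phi_x-\phi_y$ is literally the paper's $\tilde b^+_{xy}\leq g_x-g_y$. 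What your route buys is economy and transparency --- the limit computations are absorbed into the already-proved invariance propositions rather than repeated, and the reduction-to-normal-form philosophy mirrors the paper's own proof of Theorem~\ref{th3}; what the paper's direct route buys is explicit control of the pair $(f,g)$ and of relation~\eqref{eq2.2}, which it needs anyway. Your bookkeeping is sound on the delicate points: you correctly use \emph{right}-similarity so that Proposition~\ref{prop1.1.v} applies in part (iii), you correctly restrict to local solutions when transporting under $\ell_{0,1}$-similarity (the identity is globally strong for $c$, but only locality survives the transport), and your identification of the solutions across parts by uniqueness carries the same implicit reading as the paper's (part (ii) tacitly assumes both regularity hypotheses, so that the $\ell_1$- and $\ell_{0,1}$-solutions can be compared). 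There is no circularity, since Theorem~\ref{th1} is proved independently of Theorem~\ref{th2}.
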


In order to prove a converse to Theorem~\ref{th2}, we shall need
the following additional technical assumption on a solution to the assignment
problem:

\def\BLC{{\rm (PC-$\lb$)}}
\begin{myitemize}
\item[\BLC] Either the potential $\bar\phi$ or the inverse potential
$\bar\psi$ associated to $b$ and $F$ belongs to $\lb(X)$.
\end{myitemize}

\begin{theorem}\label{th3} 
Let $b:X\times X\to\rmax$ be  a kernel satisfying \AC.
If $\lb$ is either $\ell_{0,1}$ or $\ell_1$, and 
if the assignment problem associated to $b$ has a (possibly local)
locally bounded strong $\lb$-solution $F$ satisfying Condition~\BLC,
then $b$ is $\lb$-strongly regular.
If $\lb$ is  $\ell_{0}$,  $\# B_n-\# B_{n-1}$ is bounded, and if 
 the assignment problem associated to $b$ has a local
locally bounded strong restricted $\lb$-solution $F$ satisfying Condition~\BLC,
then $b$ is $\lb$-strongly regular.
\end{theorem}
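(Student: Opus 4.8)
The plan is to prove the theorem by exhibiting a strongly normal kernel that is $\lb$-similar to $b$, and then invoking Theorem~\ref{th1}: since $b$ satisfies~\AC, and~\AC\ is preserved under $\lb$-similarity by Proposition~\ref{prop1.1.i}, that theorem identifies $\lb$-strong regularity with $\lb$-similarity to a strongly normal kernel. Thus the whole argument reduces to building the right chain of $\lb$-similarities out of the data of the solution $F$ and the potentials. All similarities I construct will in fact be right-$\lb$-similarities, which is exactly what both Theorem~\ref{th1}'s right-similarity variant and the $\ell_0$/restricted-solution setting require; the $\ell_0$ case is then handled within the same scheme by Propositions~\ref{prop1.1.iv}--\ref{prop1.1.v}, using the hypothesis that $\#B_n-\#B_{n-1}$ is bounded and the passage to restricted solutions, while $\ell_{0,1}$ and $\ell_1$ rely on Propositions~\ref{prop1.1.ii}--\ref{prop1.1.iii}.

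First I would carry $b$ to the kernel $\tilde b$ of~\eqref{btilde} by a right-$\lb$-similarity: taking $H=I$, $K=F$, $\phi=0$ and $\psi_y=b_{yF(y)}$, the vector $\psi=(b_{yF(y)})_y$ lies in $\lb$ precisely because $F$ is an $\lb$-solution, so this is a genuine right-$\lb$-similarity with $\tilde b_{xy}=b_{xF(y)}-\psi_y$. The kernel $\tilde b$ has zero diagonal, and the key point is that the \emph{strong} solution hypothesis forces every nontrivial circuit of $\tilde b$ to have \emph{strictly} negative weight: testing $F$ against the bijection obtained by composing $F$ with a finite cyclic permutation of the preimages along the circuit yields a finitely supported difference whose strict positivity is exactly the statement that the circuit weight is negative (this works verbatim for restricted solutions, the $\liminf$ over $B_n$ of a finitely supported difference being the finite sum). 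By Proposition~\ref{prop1.1.i}, $\tilde b$ again satisfies~\AC; in fact, since $F$ is locally bounded, \ATC\ propagates to give $\sup\set{\tilde b_{xy}}{d(x,y)\geq n}\to-\infty$, so that all but a bounded range of directions of off-diagonal entries are already very negative.

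Next I would use Condition~\BLC. Say $\bar\phi\in\lb$ (the case $\bar\psi\in\lb$ is symmetric, replacing $\bar\phi$ by $-\bar\psi$, which satisfies~\eqref{eq1.9} by property~(ii)); then $\bar\phi$ is real and bounded, so the diagonal similarity $c_{xy}:=\tilde b_{xy}+\bar\phi_y-\bar\phi_x$, that is $H=K=I$, $\phi=\bar\phi$, $\psi=-\bar\phi$, is a legitimate $\lb$-similarity because $\pm\bar\phi\in\lb$. Equation~\eqref{eq1.9} gives $\bar\phi_x=\sup_y(\tilde b_{xy}+\bar\phi_y)$, whence $c$ is \emph{normal}: $c_{xx}=0$ and $c_{xy}\leq 0$. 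Diagonal similarities leave circuit weights unchanged, so $c$ still has strictly negative nontrivial circuits. It then remains to upgrade $c$ from normal to \emph{strongly} normal, i.e.\ to find one further diagonal $\lb$-similarity $\theta$ making every off-diagonal entry strictly negative; once this is done, $b$ is $\lb$-similar to a strongly normal kernel and Theorem~\ref{th1} concludes.

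I expect this last upgrade to be the main obstacle. The vanishing entries $c_{xy}$ form a pattern with no nontrivial directed cycle (by strict negativity of circuits), so on any finite window a strictly separating potential exists, exactly as in the finite-dimensional normalisation of Remark~\ref{rk-dimfinite}; the difficulty is to glue these local separations into a single \emph{bounded} $\theta\in\lb$. Here I would use the tightness of $\tilde b$ established above: beyond a bounded range of $d(x,y)$ the entries $c_{xy}$ are so negative that a bounded oscillation of $\theta$ cannot spoil their strict negativity, so only the ``local'' separation matters and $\theta$ may be chosen with controlled support, hence in $\lb$. Reconciling the size of this local separation with the threshold provided by~\ATC\ (and, for $\ell_0$, with the cardinality bound on $\#B_n-\#B_{n-1}$) is the delicate quantitative point. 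An alternative route that bypasses the explicit strong normalisation is to apply directly the covering-by-subdifferentials characterisation of unique solvability of $Bf=g$ from~\cite{AGK1,AGK2} to the normal kernel $c$, the strict negativity of nontrivial circuits being precisely what forces the relevant covering to be minimal; either way, the strictness furnished by the strong solution and the localisation furnished by~\ATC\ are the two ingredients that must be combined.
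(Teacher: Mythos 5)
Your reduction chain is exactly the paper's: pass to $\tilde b$ of~\eqref{btilde} by the right-similarity $\psi_y=b_{yF(y)}$, then to the normal kernel $c_{xy}=\tilde b_{xy}+\bar\phi_y-\bar\phi_x$ using~\eqref{eq1.9}. But the step you defer as ``the main obstacle'' is the actual content of the theorem, and the ingredient you propose for it --- strict negativity of circuits plus tightness --- is provably insufficient. Take $X=\Z$, $c_{xx}=0$, $c_{x,x+1}=0$, and $c_{xy}=-|x-y|$ otherwise: this is normal, satisfies \AC, and every nontrivial circuit has weight $\leq -1$ (any circuit uses a backward edge), yet a function $\theta$ with $c_{xy}+\theta_y<\theta_x$ for all $x\neq y$ must satisfy $\theta_{x+1}<\theta_x$ for all $x\in\Z$, which is impossible for $\theta\in\ell_0$ (both tails of an $\ell_0$ function share one limit, while strict decrease forces $\lim_{x\to-\infty}\theta_x>\lim_{x\to+\infty}\theta_x$), hence a fortiori for $\theta\in\ell_{0,1}$ or $\ell_1$. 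Tightness is of no help here: \ATC\ bounds $d(x,y)$ on saturated edges but does not prevent an infinite saturated chain. What saves the day in the paper is a \emph{second} use of the strong-solution hypothesis, which your proposal spends only on circuits: one must also exclude \emph{strings} (bi-infinite saturated paths) in the saturation graph $\sat(c,\phi)$, by testing $F$ against the shift along the string --- a locally bounded bijection by \ATC\ --- whose weight difference $\sum_{x\in K}(\phi_{G(x)}-\phi_x)$ tends to $0$; this is Proposition~\ref{prop3.1}(i), and it is precisely where the hypotheses $\phi\in\ell_1$ (or $\ell_{0,1}$, or $\ell_0$ with $\#B_n-\#B_{n-1}$ bounded, restricted solutions) enter. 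In the example above the identity is \emph{not} a strong solution, exactly because of the shift, so the example is consistent with the theorem but fatal to your gluing argument.

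Your fallback route fails on the same example: with $g=0$ one computes $\partial^Tg(x)=\{x,x+1\}$, so $(\partial^Tg)^{-1}(y)=\{y-1,y\}$, and this covering of $\Z$ is \emph{not} minimal although all circuits are strictly negative --- so strictness of circuits forces neither minimality of the covering nor the existence of the separating potential. Moreover, even once circuits and strings are excluded, the construction of $\theta\in\ell_1$ is not a soft local-to-global patching: the paper runs an explicit ``perestroika'' algorithm, using the finiteness of in/out degrees (from \ATC) and the dichotomy that $\larc(x)$ or $\earc(x)$ is finite (Proposition~\ref{prop3.1}(ii)--(iii)) to peel off, layer by layer, the sets $F_n$ of vertices at out-distance $n$ from the final points --- decreasing $\phi$ there by at most a prescribed summable $\psi_x>0$ so that the limit stays in $\ell_1$ --- and then symmetrically the layers $I_n$ from the initial points; the inclusion $F_\infty\subset\bigcup_{n\in\N}I_n$ is what guarantees the saturation graph empties out, so that the limiting potential satisfies~\eqref{subeigen} and Theorem~\ref{th1} applies. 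Your proposal contains neither the string exclusion nor this two-sided exhaustion, and both are essential.
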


\begin{remark} We have to stress an unpleasant small gap between
necessary and sufficient conditions: from strong $\ell_{0,1}$-regularity it
follows that the potential $\bar\phi$ belongs to $\ell_{\infty}$, but in
Theorem~\ref{th3} we assume that $\bar\phi \in \ell_{0,1}$ (which implies
\eqref{eq1.12}).
 However, when considering classes of similar
kernels this discrepancy vanishes, as shown by the following direct
corollary of Theorem \ref{th1} and Remark~\ref{normal-potential}.
\end{remark}

\begin{corollary} 
A kernel $b$, satisfying \AC, is $\lb$-strongly regular if and only if it is
$\lb$-similar to a kernel having a strong solution to the
assignment problem satisfying condition \BLC.
\end{corollary}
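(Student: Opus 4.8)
The plan is to derive the Corollary directly from Theorem~\ref{th1}, Theorem~\ref{th3}, and Remark~\ref{normal-potential}, treating the two implications separately. The key observation is that Theorem~\ref{th1} already furnishes the link between $\lb$-strong regularity and similarity to a \emph{strongly normal} kernel; the only work is to match this up with the condition ``having a strong solution satisfying~\BLC.''

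For the forward implication, suppose $b$ satisfies~\AC\ and is $\lb$-strongly regular. By Theorem~\ref{th1}, $b$ is $\lb$-similar to a strongly normal kernel, call it $c$. A strongly normal kernel has the identity map $I$ as a locally bounded strong solution to its assignment problem (this was noted just after the definition of normal kernels). Moreover, by Remark~\ref{normal-potential}, when we take $F=I$ in the construction of $\tilde c$, we obtain $\tilde c = c$ and $\bar\phi_x = \bar\psi_y = 0$ for all $x,y$; in particular both potentials are identically zero, hence lie in $\lb(X)$, so Condition~\BLC\ holds for $c$ with this solution $F=I$. Thus $b$ is $\lb$-similar to a kernel $c$ possessing a strong solution satisfying~\BLC, which is exactly the right-hand side of the Corollary.

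For the converse, suppose $b$ satisfies~\AC\ and is $\lb$-similar to a kernel $c$ that has a strong solution $G$ to its assignment problem satisfying~\BLC. The plan is first to invoke Theorem~\ref{th3} to conclude that $c$ is $\lb$-strongly regular, and then to transport this property back to $b$ via Proposition~\ref{prop1.1.i}, which states that $\lb$-strong regularity is invariant under $\lb$-similarity. The delicate point is that Theorem~\ref{th3} requires the solution to be \emph{locally bounded} and, for $\lb=\ell_0$, to be a \emph{local restricted} solution under the cardinality hypothesis on $\# B_n - \# B_{n-1}$. A clean way to handle this uniformly is to use the freedom in Theorem~\ref{th1}: rather than taking an arbitrary similar kernel with a strong solution, one should arrange (again via Theorem~\ref{th1}, which gives similarity to a \emph{strongly normal} kernel, whose canonical strong solution $I$ is automatically locally bounded and satisfies all the hypotheses of Theorem~\ref{th3}) that the hypotheses of Theorem~\ref{th3} are met.

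The main obstacle I anticipate is bookkeeping the precise class of solution required: Theorem~\ref{th3} is stated for \emph{locally bounded strong} solutions (with the restricted variant for $\ell_0$), whereas the Corollary merely says ``a strong solution.'' The cleanest reading is that the Corollary's right-hand side should be understood through the normal-form description of Theorem~\ref{th1}, so that the strong solution in question is the identity $I$ on a strongly normal representative---which is manifestly locally bounded (being the class of $I$ itself under $\rho$) and for which~\BLC\ holds trivially by Remark~\ref{normal-potential}. Once this identification is made, the forward direction reduces to Remark~\ref{normal-potential} and the backward direction to the combination of Theorem~\ref{th3} and Proposition~\ref{prop1.1.i}, with no further computation needed. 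I would therefore phrase the proof as: ``necessity follows from Theorem~\ref{th1} together with Remark~\ref{normal-potential}; sufficiency follows from Theorem~\ref{th3} and the invariance of $\lb$-strong regularity under $\lb$-similarity established in Proposition~\ref{prop1.1.i}.''
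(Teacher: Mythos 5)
Your proposal is correct and is essentially the paper's own (unwritten) argument: the ``only if'' part is exactly the paper's route via Theorem~\ref{th1} plus Remark~\ref{normal-potential} (the strongly normal representative has the identity as a locally bounded strong solution whose potentials vanish identically, hence lie in $\lb$, so \BLC\ is automatic), and the ``if'' part is Theorem~\ref{th3} combined with Proposition~\ref{prop1.1.i}, which transfers both Condition~\AC\ and $\lb$-strong regularity across the similarity, just as you conclude. One caution: your intermediate suggestion to ``arrange via Theorem~\ref{th1} that the hypotheses of Theorem~\ref{th3} are met'' would be circular in the converse direction (Theorem~\ref{th1} presupposes the strong regularity being proved), but your final phrasing rightly discards it and instead reads the corollary's ``strong solution'' with the qualifiers carried by Theorem~\ref{th3} (locally bounded, an $\lb$-bijection, and the restricted variant with $\# B_n-\# B_{n-1}$ bounded when $\lb=\ell_0$), which is indeed how the loosely stated corollary must be understood.
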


In the case of a finite set $X$, the technical assumptions in
Theorems~\ref{th2} and \ref{th3} vanish, and we recover the result 
of Butkovi\v{c} and Hevery showing that
strong regularity is equivalent to the uniqueness of the
optimal assignment problem. This result was established 
in~\cite[Theorems 1 and 3]{BH} in which the authors considered more generally
matrices with entries in a dense commutative idempotent semiring.


\section{Coverings and subdifferentials. Proofs of Theorems~\ref{th1} and~\ref{th2}}\label{sec2}

 For the analysis of the equation $Bf=g$ (also in a more general
 setting of uncountable $X$) an important role belongs to the
 notion of generalised subdifferentials 
(see for instance~\cite{martinez88,martinez95,AGK2}).

\begin{definition}Let $b:X\times X\to \rmax$ be a kernel and $B$ its
associated Moreau conjugacy.
Given $f\in \RBX$ and $y\in X$,  the
\NEW{subdifferential} of $f$ at $y$ with respect $b$ or $B$,
denoted $\partial_b f(y)$ or $\partial f(y)$ for brevity is defined
 as 
 $$
 \partial f(y)=\set{x\in X} {
b_{xy}\neq-\infty,\quad (Bf)_x=\sup_z (b_{xz}-f_z)=b_{xy}-f_y }\enspace.
 $$
The subdifferential $\partial_{b^T}g(x)$ of $g\in\RBX$ at $x\in X$ with respect to $b^T$ will be denoted by $\partial^T g(x)$ for brevity:
$$   \partial ^T g(x)
   =\set{ y\in X}{
b_{xy}\neq-\infty , \quad
 (B^Tg)_y=\sup_z (b_{zy}-g_z)=b_{xy}-g_x}  \enspace . $$
\end{definition}
\begin{remark}
In the finite dimensional case, if $f,g$ are obtained from
optimal dual solutions of the optimal assignment problem (Remark~\ref{rk-dimfinite}), every optimal permutation is obtained by selecting precisely
one element $F(x)$ in each $\partial^T g(x)$ (in such a way that the same
element is never selected twice). A symmetrical interpretation holds
with $\partial f(y)$ and the inverse optimal permutation $F^{-1}$
\end{remark}
 For a given $f$ the subdifferential is a mapping
 from $X$ to the set $\cP(X)$ of
 subsets of $X$. For any such mapping $G$, the inverse mapping
 $G^{-1}: X \mapsto \cP(X)$ is defined as
  $G^{-1}(y):=\set{x}{y\in G(x)}$ for $y\in X$.
If $Y,Z\subset X$, we say that the family of subsets
$\{G(y)\}_{y\in Y}$ is a covering of $Z$ if $Z\subset \cup_{y\in Y} G(y)$.

We shall start with the following well known basic property of
subdifferentials that we prove here for the sake of completeness.

 \begin{proposition}\label{prop2.1}
 If $g=BB^Tg $, then $(\partial ^Tg)^{-1}=\partial B^Tg$.
 \end{proposition}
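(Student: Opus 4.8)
The plan is to prove the set equality $(\partial^T g)^{-1} = \partial B^T g$ by establishing the two inclusions, unwinding the definitions of both subdifferentials and using the hypothesis $g = B B^T g$ at the crucial moment. Let me write $f := B^T g$ throughout, so that the hypothesis reads $g = Bf$ and the claimed identity becomes $(\partial^T g)^{-1} = \partial f$. Fix $x, y \in X$; by definition of the inverse mapping, $x \in (\partial^T g)^{-1}(y)$ means $y \in \partial^T g(x)$, and I must show this is equivalent to $x \in \partial f(y)$.

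First I would write out both membership conditions explicitly. On one side, $y \in \partial^T g(x)$ means $b_{xy} \neq -\infty$ and $(B^T g)_y = b_{xy} - g_x$, i.e. $f_y = b_{xy} - g_x$. On the other side, $x \in \partial f(y)$ means $b_{xy} \neq -\infty$ and $(Bf)_x = b_{xy} - f_y$. The condition $b_{xy} \neq -\infty$ is common to both, so the real content is the equivalence of $f_y = b_{xy} - g_x$ with $(Bf)_x = b_{xy} - f_y$. Rearranging each, the first says $g_x = b_{xy} - f_y$ and the second says $(Bf)_x = b_{xy} - f_y$; since $(Bf)_x = g_x$ by the hypothesis $g = Bf$, the two right-hand quantities $b_{xy} - f_y$ are literally the same expression, and the two conditions both reduce to $g_x = b_{xy} - f_y$. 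So once the definitions are unwound, the equivalence is immediate.

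The one point deserving care, which I expect to be the \emph{only} genuine obstacle, is handling the suprema and the $-\infty$ conventions cleanly rather than treating the equalities purely formally. Specifically, I must check that the defining equality in $\partial^T g(x)$, namely $f_y = \sup_z(b_{zy} - g_z) = b_{xy} - g_x$, is being used correctly: it asserts both that $f_y$ equals the claimed value and that this value is attained at the index $x$. Dually, membership in $\partial f(y)$ asserts $(Bf)_x = \sup_z(b_{xz} - f_z) = b_{xy} - f_y$. The substitution $(Bf)_x = g_x$ is exactly what converts an attainment statement for the supremum defining $B^T g$ at $y$ into an attainment statement for the supremum defining $Bf$ at $x$, and vice versa; I would verify that no spurious $-\infty$ arises (guaranteed since $b_{xy} \neq -\infty$ forces all the relevant quantities to be finite or at least consistently handled under the absorbing convention). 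With this verification in place, both inclusions follow from the single chain of equivalences, and the proof is complete.
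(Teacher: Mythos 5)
Your proof is correct and follows essentially the same route as the paper's: unwind the two defining equalities, rearrange the attainment condition $f_y=b_{xy}-g_x$ into $g_x=b_{xy}-f_y$, and substitute $g_x=(Bf)_x$ using the hypothesis $g=BB^Tg$. Your extra check of the $-\infty$ conventions is a sound refinement of a step the paper treats as immediate, but it does not change the argument.
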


\begin{proof}
We have
$
(\partial ^T g)^{-1}(y)
   =\set{ x}{b_{x,y}\neq -\infty,\; (B^Tg)_y=\sup_z (b_{zy}-g_z)=b_{xy}-g_x }
$. 
The latter relation can be rewritten as $g_x=b_{xy}-(B^Tg)_y$, or equivalently
 $B(B^Tg)_x=b_{xy}-(B^Tg)_y$, which means that $x\in \partial
 (B^Tg)(y)$.
\end{proof}

When $X$ is finite, the following result is due to
Vorobyev~\cite{Vo}, see also Zimmermann~\cite[Chapter 3]{Zi.K}.
In~\cite[Theorem 3.5]{AGK2}, we proved a more general result 
which applies to the case of a general topological space $X$.

 \begin{proposition}\label{prop2.2}
Suppose that $b$ satisfies Conditions~\AC\ and that
$g\in \R^X$ is such that $B^Tg\in \Bd(X)$.
Then $B^Tg$ is a solution to the equation $Bf=g$ if and only
 if $\partial ^Tg(x) \neq \emptyset$ for all $x$ or equivalently
 if the family of the subsets $\{(\partial ^Tg)^{-1}(y)\}_{y\in X}$ is
 a covering of $X$.
 \end{proposition}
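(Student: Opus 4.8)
The plan is to establish the characterization by working directly from the definitions of the subdifferential and the Moreau conjugacy, using the pseudo-inverse identity $B\circ B^T\circ B=B$ provided by the Galois connection structure. First I would set $f:=B^Tg$ and note that since the pair $(B,B^T)$ is a Galois connection, we always have $Bf=BB^Tg\leq g$ (this is the standard inequality for Galois connections, and indeed $g=Bf$ is precisely the solvability condition we want to detect). So the real content is to decide, for each fixed $x\in X$, whether equality $(Bf)_x=g_x$ holds, and to match this up with nonemptiness of $\partial^Tg(x)$.

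The key computation is the following. Unwinding the definitions, $y\in\partial^Tg(x)$ means $b_{xy}\neq-\infty$ and $(B^Tg)_y=b_{xy}-g_x$, i.e.\ $f_y=b_{xy}-g_x$, which rearranges to $b_{xy}-f_y=g_x$. On the other hand, $(Bf)_x=\sup_z(b_{xz}-f_z)$, and since $Bf\leq g$ we always have $b_{xz}-f_z\leq g_x$ for all $z$. Hence the supremum defining $(Bf)_x$ equals $g_x$ if and only if it is \emph{attained} at some $y$ with $b_{xy}-f_y=g_x$, which is exactly the statement that $\partial^Tg(x)\neq\emptyset$. Thus, for each fixed $x$, the equivalence ``$(Bf)_x=g_x$'' $\Longleftrightarrow$ ``$\partial^Tg(x)\neq\emptyset$'' holds. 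Quantifying over all $x\in X$ then gives ``$Bf=g$'' $\Longleftrightarrow$ ``$\partial^Tg(x)\neq\emptyset$ for all $x$,'' which is the first equivalence. The hypotheses \AC\ and $B^Tg\in\Bd(X)$ enter here to guarantee that $f=B^Tg$ is a genuine real-valued function bounded from below (so the expressions $b_{xy}-f_y$ make sense and are not vacuously $-\infty$), and that $Bf\in\R^X$, so that comparing $(Bf)_x$ with $g_x\in\R$ is meaningful; I would cite the sentence ``Under Condition \AC, the Moreau conjugacy $B$ sends $\Bd(X)$ to $\R^X$'' for this.

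For the second, equivalent reformulation, I would invoke Proposition~\ref{prop2.1}. We have $g=BB^Tg$ exactly when $B^Tg$ solves $Bf=g$; assuming this (or directly observing that $(\partial^Tg)^{-1}(y)=\set{x}{y\in\partial^Tg(x)}$ by the general definition of inverse mapping), the condition ``$\partial^Tg(x)\neq\emptyset$ for all $x$'' says precisely that every $x$ lies in $\partial^Tg(x')$ for some value $x'$—that is, every $x$ belongs to $(\partial^Tg)^{-1}(y)$ for some $y$, which is the statement that $\{(\partial^Tg)^{-1}(y)\}_{y\in X}$ covers $X$. This is a purely set-theoretic restatement once the subdifferentials are in hand.

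The main obstacle is a minor but genuine one: ensuring the supremum in $(Bf)_x$ being equal to $g_x$ is truly equivalent to \emph{attainment} rather than merely approachability. In the finite case attainment is automatic, but over a countable $X$ the supremum could in principle equal $g_x$ without being attained. This is exactly where the tightness assumption \ATC\ does its work: since $f=B^Tg$ is bounded below and $\sup_{d(x,y)\geq n}b_{xy}\to-\infty$, the quantity $b_{xz}-f_z$ becomes arbitrarily negative as $z\to\infty$, so the supremum defining $(Bf)_x$ is effectively over a finite set and is therefore attained whenever it is finite. I would make this coercivity-and-attainment argument explicit as the crux of the forward direction, as it is precisely the point where the countable setting departs from the finite one.
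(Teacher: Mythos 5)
Your proof is correct, but it takes a genuinely different route from the paper's: the paper proves Proposition~\ref{prop2.2} in two lines by reduction to Theorem~3.5 of \cite{AGK2}, the only substantive step being the verification of that theorem's applicability condition, namely that $f=B^Tg\in\Bd(X)$ together with \ATC\ force the level sets $\set{y}{b_{xy}-f_y\geq\beta}$ to be finite for every $x\in X$ and $\beta\in\R$. You instead inline a complete elementary argument: the Galois inequality $BB^Tg\leq g$ reduces the equation to pointwise attainment of the supremum defining $(Bf)_x$, and your closing paragraph --- coercivity of $z\mapsto b_{xz}-f_z$ from \ATC\ plus boundedness of $f$ from below, so the supremum is effectively over a finite set (using that bounded subsets of $X$ are finite) and is therefore attained whenever finite --- is precisely the same compactness mechanism that the paper delegates to the cited theorem. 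What your version buys is self-containedness and a transparent identification of exactly where the countable setting needs tightness; what the paper's version buys is brevity and the link to the general topological framework of \cite{AGK2}, where the finite-level-set property becomes a genuine compactness hypothesis. One small remark: your appeal to Proposition~\ref{prop2.1} for the covering reformulation is unnecessary, and conditioning it on $g=BB^Tg$ would even be circular, since the covering statement must be equivalent to nonemptiness of all the $\partial^Tg(x)$ \emph{before} one knows whether $B^Tg$ solves the equation; but your parenthetical observation that $x\in(\partial^Tg)^{-1}(y)$ if and only if $y\in\partial^Tg(x)$ is exactly the right, purely set-theoretic justification, so no gap results.
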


\begin{proof} This follows readily from 
Theorem 3.5 from~\cite{AGK2}. We only have to
observe that the assumption that $f=B^Tg\in \Bd(X)$
together with Condition~\ATC\ ensure that the set $\{y: b_{xy}-f_y\geq \beta\}$ is finite for
any $x\in X$ and $\beta \in \R$, which is the crucial condition
for the applicability of this theorem.
\end{proof}

\begin{definition} Let $G$ be a mapping from $X$ to the set of its
subsets $\cP(X)$ and let the family of subsets $\{G(y)\}_{y\in Y}$
be a covering of $Z$ with $Y,Z\subset X$.
 An element $y\in Y$ is called \NEW{essential}
(with respect to this covering) if
$
\cup_{z\in Y\setminus y} G(z)\not\supset Z$. 
The covering is called \NEW{minimal} if all elements of $Y$ are
essential.
\end{definition}


When $X$ is finite, the following result reduces to 
Vorobyev~\cite[Theorem  2.6]{Vo}, see also Zimmermann~\cite[Chapter 3]{Zi.K}.
In~\cite[Theorem 4.7]{AGK2}, we proved a more general result 
which applies to the
case of a general topological space $X$, but when $\EC=\RBX$ only.

 \begin{proposition}\label{prop2.3}
Assume that $b$ satisfies Conditions~\AC\ and that
$g\in \R^X$ is such that $B^Tg\in \EC$, where $\EC$ is 
a linear subspace of $\Bd(X)$ containing all the
maps  $\delta_y:X\to \R$ such that $\delta_y(x)=1$ 
if $x=y$ and $\delta_y(x)=0$ otherwise.

Then $B^Tg$ is the unique solution $f\in\EC$ of the equation $Bf=g$
if and only  if $\{(\partial ^Tg)^{-1}(y)\}_{y\in X}$ is
 a minimal covering of $X$.
 \end{proposition}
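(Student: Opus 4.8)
The plan is to reduce the uniqueness statement to the existence statement of Proposition~\ref{prop2.2} together with a single-coordinate perturbation argument. Write $f:=B^Tg\in\EC$. Both sides of the claimed equivalence entail that $\{(\partial^Tg)^{-1}(y)\}_{y\in X}$ covers $X$ (a minimal covering is a covering, and uniqueness presupposes existence), so by Proposition~\ref{prop2.2} I may assume throughout that $f$ solves $Bf=g$, equivalently $g=BB^Tg$. Then Proposition~\ref{prop2.1} applies and identifies $(\partial^Tg)^{-1}(y)=\partial f(y)=\set{x}{b_{xy}\neq-\infty,\ (Bf)_x=b_{xy}-f_y=g_x}$, the set of indices $x$ for which the supremum defining $(Bf)_x=g_x$ is attained at $y$. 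The first key observation is that $f$ is the \emph{least} solution: since $(B,B^T)$ is a Galois connection, $Bh\le g\iff h\ge B^Tg$ (both amount to $b_{xy}\le g_x+h_y$ for all $x,y$), so every solution $h$ of $Bh=g$ satisfies $h\ge f$.

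Next I would rephrase essentiality combinatorially. Unwinding the definition, the index $y$ is essential for the covering $\{\partial f(z)\}_{z\in X}$ exactly when some $x\in\partial f(y)$ lies in no other $\partial f(z)$, i.e.\ when the supremum $(Bf)_x=g_x$ is attained \emph{only} at $y$: $g_x=b_{xy}-f_y$ while $g_x>b_{xz}-f_z$ for all $z\neq y$. Thus minimality of the covering means that every index $y$ is ``pinned'' by such a uniquely-maximized $x$.

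For the equivalence itself I would argue by a perturbation in the single direction $\delta_y$, which is where the hypothesis $\delta_y\in\EC$ enters. Suppose $y$ is \emph{not} essential; then every $x\in\partial f(y)$ has an alternative maximizer $z\neq y$ with $b_{xz}-f_z=g_x$, and every $x\notin\partial f(y)$ is covered by some $\partial f(z)$ with $z\neq y$. Increasing only the $y$-th coordinate, i.e.\ replacing $f$ by $h'=f+\epsilon\delta_y$ with $\epsilon>0$, leaves all these maximizers untouched, so $(Bh')_x=g_x$ for every $x$; hence $h'\in\EC$ is a solution distinct from $f$, and uniqueness fails. Conversely, if uniqueness fails, take a solution $h\neq f$; by the least-solution property $h\ge f$ and $h_y>f_y$ for some $y$, and $h'=f+\epsilon\delta_y\in\EC$ with a finite $\epsilon>0$ chosen so that $f_y+\epsilon\le h_y$ satisfies $f\le h'\le h$, whence the antitonicity of $B$ gives $g=Bh\le Bh'\le Bf=g$, so $h'$ solves $Bh'=g$; but if $y$ were essential the pinning index $x$ would force $(Bh')_x<g_x$, a contradiction, so $y$ is not essential and the covering is not minimal. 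Combining the two implications yields the equivalence.

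The step I expect to require the most care is the bookkeeping in the perturbation argument: checking that raising $f_y$ preserves the equality $(Bh')_x=g_x$ at \emph{every} $x$, which relies on the fact (guaranteed by Condition~\ATC\ and $f\in\Bd(X)$, as in the proof of Proposition~\ref{prop2.2}) that each super-level set $\set{z}{b_{xz}-f_z\ge g_x}$ is finite, so that a row maximized at several indices keeps a maximizer once $y$ is deleted, and the ``second best'' value at a pinned row stays strictly below $g_x$. Alternatively, one could deduce the whole statement from the case $\EC=\RBX$ already treated in~\cite[Theorem~4.7]{AGK2}, by remarking that the single-coordinate witness $f+\epsilon\delta_y$ shows that uniqueness in $\EC$ and uniqueness in $\RBX$ coincide as soon as $\delta_y\in\EC$ for all $y$.
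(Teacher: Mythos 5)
Your proof is correct, and it is worth separating its two halves. The direction ``unique $\Rightarrow$ minimal'' coincides with the paper's own argument: the paper, assuming the covering is not minimal at $y_0$, uses $g\geq BB^Tg$ and the covering to get $g_x=\sup_{y\neq y_0}\bigl(b_{xy}-(B^Tg)_y\bigr)$ for all $x$, and then exhibits the second solution $B^Tg+\delta_{y_0}$ (your $h'=f+\epsilon\delta_y$ with $\epsilon=1$); your version of this step is the same perturbation with the same use of the hypothesis $\delta_y\in\EC$ and linearity of $\EC$. Where you genuinely diverge is the direction ``minimal $\Rightarrow$ unique'': the paper does not prove it but cites \cite[Theorem~4.7]{AGK2} for the case $\EC=\RBX$ (uniqueness in the big space passing trivially to any subspace containing $B^Tg$), whereas you give a direct, self-contained argument: the Galois-connection equivalence $Bh\leq g\iff h\geq B^Tg$ shows $f=B^Tg$ is the least solution; the sandwich $f\leq f+\epsilon\delta_y\leq h$ together with antitonicity of $B$ converts any competing solution $h$ into a one-coordinate witness $h'$; and at a ``pinned'' row $x$ of an essential index $y$ one gets $(Bh')_x<g_x$, where the needed strict bound $\sup_{z\neq y}(b_{xz}-f_z)<g_x$ is correctly justified by the finiteness of the super-level sets $\set{z}{b_{xz}-f_z\geq\beta}$, which follows from Condition~\ATC\ and $f\in\Bd(X)$ exactly as in the proof of Proposition~\ref{prop2.2} --- this is indeed the one point where the argument would otherwise fail over an infinite $X$, since strict inequalities term by term do not survive an unattained supremum. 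Your closing observation, that the single-coordinate witness shows uniqueness in $\EC$ and in $\RBX$ coincide whenever all $\delta_y\in\EC$, is also accurate and makes explicit the mechanism implicitly behind the paper's citation. The trade-off: the paper's route is shorter but leans on the external theorem for general topological spaces; yours is elementary and self-contained, and has the expository benefit of displaying precisely where \ATC\ and the hypothesis $\EC\subset\Bd(X)$ enter the uniqueness direction.
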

\begin{proof}
If $\EC$ were replaced by $\RBX$ in the statement of the proposition
while keeping the condition that $B^Tg\in\Bd(X)$, 
this would be a consequence of Theorem 4.7 from \cite{AGK2}.
This shows in particular the  ``if'' part of the proposition 
for all subspaces $\EC$.

Let us prove the ``only if'' by adapting the proof 
of~\cite[Theorem 4.7]{AGK2}.
Assume that $g\in \R^X$ is such 
that  $B^Tg\in \EC$ is the unique solution $f\in\EC$ of
the equation $Bf=g$.
By Proposition~\ref{prop2.2}, the family of subsets 
$\{(\partial ^Tg)^{-1}(y)\}_{y\in X}$ is  a covering of $X$.
Assume by contradiction that this covering is not minimal, 
i.e., that there exists $y_0\in X$ such that for all $x\in X$,
there exists $y\in X\setminus y_0$ such that
$x\in (\partial^T g)^{-1}(y)$.
This implies that $g_x=b_{xy}-(B^Tg)_y$, and since $g\geq  B B^T g$, we get:
\begin{equation}
g_x=\sup_{y\in X\setminus y_0} b_{xy} -(B^T g)_y \quad\forall x\in X\enspace.
\label{eq-n}
\end{equation}
Consider $f=B^Tg+\delta_{y_0}$. 
Since $B^Tg\in\EC\subset \R^X$ and $\delta_{y_0}\in
\EC$ and $\delta_{y_0}\not\equiv 0$, we obtain that $f\in \EC$ and 
that $f\neq B^T g$.
Since $f\geq B^T g$, we get that $Bf\leq B B^T g=g$.
Moreover, from~\eqref{eq-n},  we deduce the reverse inequality
$Bf\geq g$, hence $f$ is a solution of $Bf=g$, and we get a contradiction.
This concludes the proof.
\end{proof}

Proposition~\ref{prop2.3} can be applied in particular to $\EC=\Bd(X)$ or
to $\EC=\lb(X)$.

The key point in proving Theorems \ref{th1} and \ref{th2} is contained
in the following statement.

\begin{proposition}\label{prop2.4}
Let $\EC$ be as in Proposition~\ref{prop2.3}.
Suppose $g$ and $B^Tg$ belong to $\EC$ and are such that
$f=B^Tg$ is the unique solution $h\in\EC$ to the equation $Bh=g$
 and $g$ is the unique solution $h\in\EC$ to the equation $B^T h=f$.
Then there  exists a locally bounded bijection $F:X \to X$ such that
  \begin{equation} \label{eq2.1}
y=F(x) \iff \partial f(y)=\{x\}
 \iff \partial ^Tg (x)=\{y\} \enspace . 
\end{equation}
In particular
\begin{subequations}
\label{eq2.2}
\begin{gather}
b_{xF(x)}=g_x+f_{F(x)}\label{eq2.2-1}\\
\forall z\neq F(x) \quad  g_{x} > b_{xz}-f_z \enspace,
 \qquad
 \forall z\neq x \quad f_{F(x)} > b_{zF(x)}-g_z \enspace .\label{eq2.2-2}
 \end{gather}
\end{subequations}
 \end{proposition}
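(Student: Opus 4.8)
The plan is to build the bijection $F$ directly from the covering-minimality machinery. By Proposition~\ref{prop2.2} (applicable since $f=B^Tg\in\EC\subset\Bd(X)$ and $g\in\R^X$), both families $\{(\partial^Tg)^{-1}(y)\}_{y\in X}$ and, by symmetry applied to $B^T$, $\{(\partial f)^{-1}(x)\}_{x\in X}$ are coverings of $X$; and by Proposition~\ref{prop2.3} the uniqueness hypotheses say both coverings are \emph{minimal}. First I would extract the pointwise consequence of minimality: for each $x\in X$, the set $(\partial^Tg)^{-1}$ being a minimal covering forces the existence of some essential element that covers $x$ alone. More precisely, I would argue that minimality of $\{(\partial^Tg)^{-1}(y)\}_y$ implies that for every $y$ there is a \emph{private} point $x$ covered by $(\partial^Tg)^{-1}(y)$ and by no other member, i.e.\ $\partial^Tg(x)=\{y\}$ is forced somewhere; dually, minimality of $\{(\partial f)^{-1}(x)\}_x$ gives, for every $x$, a private point $y$ with $\partial f(y)=\{x\}$.

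The key structural step is to show the three conditions in~\eqref{eq2.1} are equivalent and define a genuine bijection. Using Proposition~\ref{prop2.1} (which gives $(\partial^Tg)^{-1}=\partial B^Tg=\partial f$, valid since $g=BB^Tg$ because $B^Tg$ solves $Bh=g$), the singleton conditions $\partial f(y)=\{x\}$ and $x\in(\partial^Tg)^{-1}(y)$ with no competitor translate into each other. I would verify that $\partial^Tg(x)=\{y\}$ is equivalent to $\partial f(y)=\{x\}$ by unwinding the definitions: $\partial^Tg(x)=\{y\}$ says $y$ is the unique maximiser of $b_{xz}-g_x$ (equivalently $f_y=b_{xy}-g_x$ strictly), while $\partial f(y)=\{x\}$ says $x$ is the unique maximiser in $(Bf)_x$; the Galois/pseudo-inverse relations $BB^TB=B$ tie these together. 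Defining $F(x)$ to be the unique $y$ with $\partial^Tg(x)=\{y\}$, I must check $F$ is well defined (existence from minimality of one covering, uniqueness from minimality of the other), and that $x\mapsto F(x)$ and $y\mapsto F^{-1}(y)$ are mutually inverse, giving a bijection.

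The consequences~\eqref{eq2.2} then fall out by definition: \eqref{eq2.2-1} is exactly the defining equality $b_{xF(x)}=b_{xy}-g_x+\,g_x=\dots$, i.e.\ the membership $F(x)\in\partial^Tg(x)$ rewritten as $b_{xF(x)}=g_x+f_{F(x)}$ (using $f_{F(x)}=(B^Tg)_{F(x)}$), while~\eqref{eq2.2-2} records that the maximisers are \emph{unique}, which is precisely the singleton property of the subdifferentials. The main obstacle I anticipate is \textbf{local boundedness} of $F$: nothing combinatorial in the covering argument controls the distance $\rho(F,I)=\sup_x d(x,F(x))$, so this must come from the tightness condition~\ATC. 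Here I would argue that if $d(x,F(x))$ were unbounded along some sequence, then by~\ATC\ the values $b_{xF(x)}=g_x+f_{F(x)}$ would tend to $-\infty$; combined with $f,g$ being bounded below (they lie in $\Bd(X)$, and in the relevant spaces $\ell_*$ even bounded) this would contradict~\eqref{eq2.2-1} for large $d(x,F(x))$, forcing $\rho(F,I)<\infty$. Pinning down exactly which boundedness of $f$ and $g$ is available under the abstract hypothesis on $\EC$—and whether~\ATC\ alone suffices or one needs the specific structure of $\lb$—is the delicate point, and I expect the proof to invoke~\ATC\ together with the lower boundedness of $g+f\circ F$ at this step.
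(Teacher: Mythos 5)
Your proposal is correct and follows essentially the same route as the paper's proof: minimality of both coverings obtained by applying Propositions~\ref{prop2.2} and~\ref{prop2.3} to the two equations $Bh=g$ and $B^Th=f$, the identification $(\partial^T g)^{-1}=\partial f$ from Proposition~\ref{prop2.1} to build the bijection $F$ out of singleton subdifferentials, with \eqref{eq2.2} read off from the definitions. The delicate point you flag at the end resolves exactly as you guessed: since $\EC\subset\Bd(X)$ by hypothesis, $f$ and $g$ are bounded from below, hence $b_{xF(x)}=g_x+f_{F(x)}$ is bounded from below, and \ATC\ then forces $\sup_x d(x,F(x))<\infty$ --- no further structure of $\lb$ is needed.
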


\begin{remark} As one easily checks, the inverse statement holds as
well: if a locally bounded bijection $F$ and if the functions
$f, g\in \EC$ satisfy \eqref{eq2.1}, then $f=B^Tg$ is the unique solution
$h\in\EC$ to equation $Bh=g$
 and $g$ is the unique solution $h\in\EC$ to the equation $B^Th=f$.
\end{remark}

\begin{proof}[Proof of Proposition~\ref{prop2.4}]
  Applying Proposition \ref{prop2.3} to the equation $B^Th=f$ one
concludes that for all $x$ there exists $y$ such that
 $y\in (\partial f)^{-1}(x)$, but $y\notin (\partial f)^{-1}(z)$
 for any $z\neq x$. In other words $(\partial f)(y)=\{x\}$,
 which by Proposition \ref{prop2.1} means that
 $(\partial^T g)^{-1}(y)=\{x\}$. Hence, defining the mapping
 $F : X\to \cP(X)$ by the formula
\begin{equation}\label{defF}
 F(x) =\set{ y}{(\partial f)(y)=\{x\}}
  =\set{ y }{(\partial^T g)^{-1}(y)=\{x\}}\subset \partial^T g (x)\enspace ,
\end{equation}
we deduce that $F(x) \neq \emptyset $ for all $x$ and
 $F$ is injective in the sense that $F(x)\cap
 F(z)=\emptyset$ whenever $x\neq z$. Applying now Proposition
 \ref{prop2.3} to the equation $Bf=g$ one finds that
 for all $y$ there exists $x$ such that
 $(\partial^T g)(x)=\{y\}$. From this one easily concludes that
 each set $F(x)$ contains precisely one point and that $F$ is
 surjective, which finally implies that $F$ is a bijection
 $X\to X$ such that \eqref{eq2.1} holds.

From the definition of $\partial f$ and $\partial^T g$, and
$Bf=g$, $B^Tg=f$, we deduce from~\eqref{eq2.1} that
$g_x=b_{xF(x)}-f_{F(x)}$ and that 
$g_{z}>b_{z F(x)}-f_{F(x)}$ for $z\neq x$,
and $f_{z}>b_{xz}-g_x$ for $z\neq F(x)$,
from which~\eqref{eq2.2} follows.

Let us show that $F$ is locally  bounded. 
Indeed,
 since $f$ and $g$ are bounded from below,
we get that  $b_{xF(x)}=f_{F(x)}+g_x$ is bounded from below,
but since $b$ satisfies \ATC, this implies that 
$d(x,F(x))$ is bounded.
\end{proof}

\begin{remark}\label{bij-sr-finite}
When $X$ is a finite set, the injectivity of the map $F$ defined 
in~\eqref{defF} implies automatically  that $F(x)$ contains exactly
one point and that it is a bijection.
Hence, in that case, the proof of Proposition~\ref{prop2.4}
only needs the assumption that $B^Th=f$ 
has a unique solution $h$, and the proof is thus much shorter.
By symmetry, in that case, one can also prove Proposition~\ref{prop2.4}
using the only assumption that $Bh=g$ has a unique solution $h$, which is
the definition of strong regularity given in~\cite{BH}.
From Proposition~\ref{prop2.3} (or from~\cite[Theorem  2.6]{Vo}),
one can also deduce that, when $X$ is finite,
the two assumptions are equivalent, and thus our definition 
of strong regularity is equivalent to that of~\cite{BH}, when the set
$X$ is finite.
\end{remark}

\begin{proof}[Proof of Theorem \ref{th1}] 
Let $b$ satisfies Condition~\AC.
If $b$ is $\lb$-similar to a strongly normal kernel $c$, then
by Proposition~\ref{prop1.1.i}, $c$ also satisfies~\AC.
Now taking for $g$ the zero function,
we get that $g\in\lb(X)$ and $f=0\in\lb(X)$.
Moreover, $\partial_c f(y)=\{y\}$ and $\partial_{c^T} g(x)=\{x\}$,
thus the covering of $X$ by
$\{ (\partial_{c^T}g)^{-1}(y)\}_{y\in X}$ is minimal, and by
Proposition~\ref{prop2.3}, the equation $Bh=g$ has a unique solution
$h\in\lb(X)$. Similarly, the equation $B^Th=f$ has a unique solution
$h\in\lb(X)$. This shows that $c$ is $\lb$-strongly regular.
Hence, by Proposition~\ref{prop1.1.i}, $b$ is also  $\lb$-strongly regular.
This shows the ``if'' part of the assertion of  Theorem \ref{th1}.

Let us show the ``only if'' part.
Assume now that $b$ is $\lb$-strongly regular, that is there exists 
$f,g\in\lb(X)$ such that $f=B^T g$, and the equations $Bh=g$ and 
$B^Th=f$ have both a unique solution in $\lb(X)$.
By Proposition~\ref{prop2.4}, there exists a locally bounded
bijection $F:X\to X$ satisfying~\eqref{eq2.2}.
From these equations, we deduce that the kernel $c:X\times X\to\rbar$ such that
$  c_{xy}= b_{xF(y)}-f_{F(y)}-g_x  $
is strongly normal.
Since $f\in\lb(X)$ and $F$ is locally bounded, $f\circ F\in\lb(X)$,
and since $g\in\lb(X)$, we deduce that $c$ is $\lb$-right-similar to $b$.
Similarly the kernel $c_{F^{-1}(x)F^{-1}(y)}$ is strongly normal and
 $\lb$-left-similar to $b$.
This finishes the proof of the theorem.
\end{proof}

\begin{proof}[Proof of Theorem~\ref{th2}] 
Let $b$ be a kernel satisfying \AC.
Assume that $b$ is $\lb$-strongly regular, and 
let $f$ and $g$ be as in Definition~\ref{defi-strg-reg}.
By  Proposition~\ref{prop2.4}, there exists a locally bounded
bijection $F:X\to X$ satisfying~\eqref{eq2.2}.
From these equations, we deduce that if $G:X\to X$ is another bijection,
$
   b_{xF(x)}-f_{F(x)} \geq b_{xG(x)}-f_{G(x)} $ 
for all $x\in X$, and that  the inequality is strict when $G(x)\neq F(x)$.
Hence if $G\neq F$, we get that
\begin{equation}\label{th34-eq1}
 \liminf_{K\in\KK } \sum_{x\in K} (b_{xF(x)}- b_{xG(x)})
> \liminf_{K\in\KK } \sum_{x\in K}(f_{F(x)}-f_{G(x)})\end{equation}
as soon as the r.h.s.\ of this inequality is finite. But,
the same arguments as in Propositions~\ref{prop1.1.ii} and~\ref{prop1.1.iii}
show that the r.h.s.\ of~\eqref{th34-eq1} is a limit and is equal to $0$
when either $f\in\ell_1$, or $f\in\ell_{0,1}$ while 
$F$ and $G$ are at a finite distance.
This shows that, when $\lb=\ell_1$, $F$ is a strong (global) solution
to the assignment problem associated to $b$, and that,
when $\lb=\ell_{0,1}$, $F$ is a strong local solution. 
Similarly,
\begin{equation}\label{th34-eq2}
 \liminf_{n\to\infty } \sum_{x\in B_n} (b_{xF(x)}- b_{xG(x)})
> \liminf_{n\to\infty } \sum_{x\in B_n}(f_{F(x)}-f_{G(x)})\end{equation}
as soon as the r.h.s.\ of this inequality is finite. But,
the same arguments as in Proposition~\ref{prop1.1.v}
show the r.h.s.\ of~\eqref{th34-eq2} is $0$ when $f\in\ell_0$, $F$ and $G$ are 
locally bounded and $\# B_n-\# B_{n-1}$ is bounded.
This shows that when $\# B_n-\# B_{n-1}$ is bounded, and
$\lb=\ell_0$, $F$ is a strong local restricted solution
to the assignment problem associated to $b$.

Since $b_{xF(x)}=g_x+f_{F(x)}$
and $\lb$ is invariant by any locally bounded bijection, we get that
$(b_{xF(x)})_{x\in X}$ is in $\lb$ and thus $F$ is a $\lb$-bijection.
Moreover, by the 
uniqueness of a strong local solution or of a strong restricted local solution,
the solutions $F$ obtained for the $\ell_1$ and $\ell_{0,1}$ cases are the same
under the assumptions of Point 
{\rm (ii)}, and  the solutions for the $\ell_{0,1}$
 and $\ell_{0}$ cases are the same 
under the assumptions of Point {\rm (i)} and the assumption that
$\# B_n-\# B_{n-1}$ is bounded.

It remains to show the properties of 
$\tilde b$ defined from $F$ by~\eqref{btilde},
and of the potentials $\bar\phi$ and $\bar\psi$ defined by~\eqref{eq1.8}.
From~\eqref{eq2.2}, we deduce that 
$\tilde b_{xy}\leq g_x-g_y$ for all $x,y\in X$, hence
\begin{equation}\label{bplus}
\tilde b^+_{xy} \le g_x-g_y.
\end{equation}
Since $g\in\lb\subset \ell_0$ for all cases of $\lb$ considered in
 Theorem~\ref{th2}, 
the r.h.s.\ of the above inequality~\eqref{bplus} tends to 0 
as $x,y \to \infty$, which shows~\eqref{eq1.12}.
Moreover, by definition of $\bar\phi$ and $\bar\psi$,
we get that 
$
\sup_{x} \bar\phi_x=\sup_{y} \bar\psi_y=\sup_{x,y} \tilde b^+_{xy} 
$ 
and using~\eqref{bplus} and the boundedness of $g$, we get that the
functions $\bar\phi$ and $\bar\psi$ are bounded from above.
Since they are also nonnegative functions, they are necessarily bounded.
\end{proof}

\section{"Perestroika" algorithm: proof of Theorem~\ref{th3}}\label{sec3}

Suppose the assumptions of Theorem \ref{th3} hold true
for one of the sets  $\lb$ considered in the statement.
Let $F$ be a
locally bounded strong local $\lb$-solution with $\lb$ being
either $\ell_{0,1}$ or $\ell_1$, or a locally bounded strong local restricted
$\ell_0$-solution to the assignment problem 
associated to the kernel $b$, satisfying condition \BLC. 
We shall consider the case where the potential $\bar\phi$ 
defined in~\eqref{eq1.8} belongs to $\lb(X)$ (the case with the
inverse potential is dealt with similarly). Since $\lb\subset\ell_0$,
this assumption together with Equation \eqref{eq1.11} implies 
Condition~\eqref{eq1.12}.

By Propositions~\ref{prop1.1.i}, \ref{prop1.1.ii}, \ref{prop1.1.iii}
and~\ref{prop1.1.v}
replacing the kernel $b$ with the $\lb$-right-similar kernel $c$ 
such that $c_{xy}=\tilde b_{xy}+\bar\phi_y-\bar\phi_x$, with $\tilde b$
as in~\eqref{btilde}, changes neither  Condition~\AC, nor
the property of $\lb$-strong 
regularity, nor the above property of having  a locally bounded strong local
$\lb$-solution (resp.\ restricted $\ell_0$-solution)
 to the assignment problem when $\lb$ is $\ell_1$ or $\ell_{0,1}$ (resp.\
$\ell_0$). Moreover, by the
proof of Proposition~\ref{prop1.1.ii}, 
we see that the solution of the assignment problem associated to the
kernel $c$ is the identity map. Since the diagonal entries of $c$ vanish,
we get that $\tilde c=c$, and by~\eqref{eq1.9} for $\bar\phi$ we get that
all the entries of $c$ are nonpositive, hence $c$ is a normal kernel.

Therefore, denoting the new  kernel again by $b$, 
we are reduced to the case where $b$ is a normal kernel and
$F$ is the identity map. From now
on, we shall suppose (without loss of generality) that these
additional simplifying conditions hold true. 
Hence $b$ satisfies the following conditions:
\begin{myitemize}
\item[\NCA]
$b$ is a normal kernel, satisfying Conditions \AC,
and the identity map of $X$ is a strong local 
solution or a strong restricted local solution
of its associated assignment problem.
\end{myitemize}
This implies in particular that the potential function $\bar\phi$ associated 
to $b$ is identically equal to $0$. 
In order to prove Theorem~\ref{th3}, we need 
to show that $b$ is necessarily $\lb$-strongly regular.
By Theorem~\ref{th1}, and the fact that $b$ satisfies Conditions~\AC,
it is enough to show that $b$ is $\lb$-right-similar to a strongly 
normal kernel.
To this end, we shall construct a function $\phi\in\ell_1(X)$ such that
\begin{equation}
\label{subeigen}
b_{xy}+\phi_y<\phi_x\end{equation}
for all $x\neq y\in X$, since then  the kernel $c$ with entries
$c_{xy}=b_{xy}+\phi_y-\phi_x$ would be strongly normal and
$\lb$-right-similar to $b$ ($\ell_1\subset \lb$).
Note that since $b$ satisfies Condition~\ATC\ and $\phi$ is bounded,
then~\eqref{subeigen} is equivalent to 
the condition:
$(A(-\phi))_x< \phi_x$, 
for all $x\in X$, or to the condition:
$(A^T\phi)_y< -\phi_y$, 
for all $y\in X$, where $A$ is the Moreau conjugacy associated
to the kernel $a$ which coincides with $b$ except on the diagonal
where it is equal to $-\infty$ ($a_{xy}=b_{xy}$ if $x\neq y$ and
$a_{xx}=-\infty$).

Given a function $\phi\in\Bd(X)$ and a kernel $b:X\times X\to\rmax$
satisfying
\begin{equation} \label{eq3.2}
b_{xy}+\phi_y\leq \phi_x \quad \forall x\neq y,
\end{equation}
we define the \NEW{saturation graph} associated to $\phi$ and $b$,
denoted by $\sat (b,\phi)$, or simply $\sat$ or $\sat(\phi)$,
as the (infinite) oriented graph whose edges
consist of the pairs $(x,y)\in X\times X$ such that $x\neq y$ and
\begin{equation} \label{eq3.1}
b_{xy}+\phi_y =\phi_x
\end{equation}
and whose set of vertices $\vert=\vert(b,\phi)$ is 
the subset of elements of $X$ that are adjacent to an edge. 
As usual by a \NEW{path} of length $n\geq 1$ in an oriented graph $G$
we mean a finite sequence $(x_1,\ldots,x_{n+1})$ of vertices such
that $(x_k,x_{k+1})$ is an edge for all $k=1,\ldots,n$.
and by a \NEW{circuit} (of length $n$)
we mean a path $(x_1,\ldots,x_{n+1})$ such that $x_{n+1}=x_1$.
An \NEW{infinite path}
leaving (resp.\ entering) the vertex $x$ of $G$ is a sequence $(x_n)_{n\geq 0}$ 
(resp. $(x_n)_{n\leq 0}$) such that $x_0=x$ and $(x_k,x_{k+1})$
is an edge for all $k\geq 0$ (resp.\ $k<0$).
A \NEW{string}  of $G$ is a sequence $(x_n)_{n\in\Z}$
such that $(x_n,x_{n+1})$ is an edge for all $n\in\Z$. 
The length of an infinite path or of a string is infinity.
The main properties of the saturation graph 
associated to the kernel $b$ are collected in the following statement.

 \begin{proposition}\label{prop3.1}
Let $b:X\times X\to\rmax$ be a kernel satisfying Condition~\NCA,
and $\phi\in\ell_1(X)$ satisfy~\eqref{eq3.2},
and denote by $\sat$ their saturation graph and by $\vert$ the set of
its  vertices. Then
{\rm (i)} $\sat$ contains no circuits nor strings. 
{\rm (ii)}~For all $x\in \vert$,
the set of edges  entering or leaving $x$ is finite.
{\rm (iii)}~For all $x\in \vert$, denote by $\larc(x)$  (resp.\  $\earc(x)$)
the supremum of the lengths of all the paths leaving $x$ (resp.\ entering $x$).
Then either $\larc(x)$ or $\earc(x)$ is finite.
{\rm (iv)}~If $\vert$ is nonempty, then the set of its end points
is nonempty, where by an end point we mean either an initial point
(no edge is entering it) or a final point (no edge is leaving it).
 \end{proposition}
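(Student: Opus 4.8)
The plan is to prove the four assertions of Proposition~\ref{prop3.1} in a roughly dependent order, extracting each from the combination of normality (\NCA), the summability $\phi\in\ell_1(X)$, and the tightness condition~\ATC. The governing heuristic is that along any path in the saturation graph the values $\phi_{x_k}$ must \emph{strictly decrease} whenever a strictly negative off-diagonal entry is traversed, and that summability forbids infinitely long monotone excursions.

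For part~{\rm (i)}, I would argue that summing the defining equality \eqref{eq3.1} along a circuit $(x_1,\dots,x_{n+1}=x_1)$ gives $\sum_k b_{x_kx_{k+1}}=0$; since $b$ is normal its off-diagonal entries are nonpositive and all are distinct vertices (an edge requires $x\neq y$), so every $b_{x_kx_{k+1}}\le 0$ with equality only if the weight of the circuit vanishes. The key is that a saturated circuit of zero weight would contradict the fact that the identity map is a \emph{strong} (strict) local solution of the assignment problem: the permutation cycling along the circuit would achieve the same value, violating the strict inequality in \eqref{eq1.4} or \eqref{eq1.4bis}. For strings, I would telescope \eqref{eq3.1} along $(x_n)_{n\in\Z}$ to get $\phi_{x_m}-\phi_{x_{-m}}=\sum b_{x_kx_{k+1}}\le 0$; combined with the fact that $\phi\in\ell_1$ forces $\phi_{x_n}\to 0$ along any string whose vertices escape to infinity (and vertices must escape, else they repeat, producing a circuit, already excluded), one derives a contradiction with strictness of the edge inequalities.

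For part~{\rm (ii)}, I would fix $x\in\vert$ and note that an edge $(x,y)$ entering or leaving $x$ satisfies $b_{xy}=\phi_x-\phi_y$ (or the transposed relation). Since $\phi$ is bounded, $b_{xy}$ is bounded below along such edges; but \ATC\ says $b_{xy}\to-\infty$ as $d(x,y)\to\infty$, so $y$ must lie in a bounded, hence finite (by the standing assumption that bounded sets are finite), set. Thus only finitely many edges touch $x$. Part~{\rm (iii)} is the step I expect to be the main obstacle: I would use the telescoping identity $\phi_{x_0}-\phi_{x_n}=\sum_{k}b_{x_kx_{k+1}}$ along a path, so that the length-$n$ path leaving $x$ forces $\phi_{x_n}\le\phi_x-c\cdot(\text{number of strictly negative edges})$; the difficulty is ruling out, for \emph{both} directions simultaneously, an infinite path, which would by~(i) have to visit infinitely many distinct vertices and hence (using $\phi\in\ell_1$ and \eqref{bkconv}-type summability arguments borrowed from Proposition~\ref{prop1.1.iii}) drive $\phi$ below its infimum. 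The delicate point is that the two potentials $\bar\phi$ and $\bar\psi$ cannot both be infinite at a vertex, which is exactly the finiteness dichotomy recorded in point~(iii) of the properties following \eqref{eq1.11}; I would invoke that $\bar\phi$ or $-\bar\psi$ solves \eqref{eq1.11} and is $\ell_1$, forcing at least one of $\larc(x),\earc(x)$ finite.

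Finally, part~{\rm (iv)} follows from~{\rm (iii)}: given a nonempty $\vert$, pick any vertex $x$; by~(iii) one of $\larc(x),\earc(x)$ is finite, say $\larc(x)<\infty$. Then a longest path leaving $x$ terminates at a vertex $y$ from which no edge leaves, i.e.\ a final point; symmetrically a finite $\earc(x)$ yields an initial point. I would confirm that such a maximal path exists using part~{\rm (ii)} (local finiteness guarantees that the supremum defining $\larc(x)$, being finite, is attained along a genuine path rather than a limit of ever-lengthening paths). Thus the set of end points is nonempty, completing the proposition.
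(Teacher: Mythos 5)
Your parts (ii) and (iv), and the circuit half of (i), essentially match the paper's proof: edges of $\sat$ satisfy $d(x,y)\le M$ for some $M$ by \ATC\ together with boundedness of $\phi$, giving local finiteness, and cycling along an elementary circuit yields a locally bounded bijection with the same weight as the identity, contradicting the strict inequality \eqref{eq1.4}. But the string case of (i) has a genuine gap. Your telescoping argument, combined with $\phi_{x_n}\to 0$ along the (injective) string, shows only that $\phi$ vanishes along the string and every edge of the string has weight $b_{x_kx_{k+1}}=0$. There is no ``strictness of the edge inequalities'' to contradict: on saturated edges \eqref{eq3.1} is an \emph{equality}, and a normal (as opposed to strongly normal) kernel is perfectly allowed to have vanishing off-diagonal entries, so nothing you have derived is contradictory. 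The contradiction must come from Condition \NCA: as in the paper, one constructs the shift bijection $G$ along the string (identity elsewhere), which is locally bounded precisely because all edges have $d(x,y)\le M$, and uses \eqref{eq3.1} to get $\sum_{x\in K}(b_{xF(x)}-b_{xG(x)})=\sum_{x\in K}(\phi_{G(x)}-\phi_x)$, which has zero limit by the $\ell_1$ argument of Proposition~\ref{prop1.1.ii}, contradicting \eqref{eq1.4} or \eqref{eq1.4bis}. You have exactly this tool in hand (you use it for circuits) but fail to deploy it where it is indispensable.

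Part (iii) is conceptually off track. Your plan --- ruling out infinite paths via $\ell_1$-summability and the potentials $\bar\phi,\bar\psi$ --- cannot work: since saturated edges may have zero weight, arbitrarily long and even infinite one-sided paths in $\sat$ are compatible with $\phi\in\ell_1$ (take $X=\N$ with $b_{x,x+1}=0$ and all other off-diagonal entries very negative; \NCA\ holds and $\larc(0)=\infty$), and in the reduced setting the potential $\bar\phi$ is identically $0$, so it carries no information about path \emph{lengths}; you are conflating weights of paths with their lengths. Note also that (iii) does not exclude infinite paths, only the simultaneous infiniteness of $\larc(x)$ and $\earc(x)$. The paper's argument is purely combinatorial: by (i) there is no string, so infinite paths cannot both leave and enter $x$ (concatenation would produce a string); then, in a direction where no infinite path exists, a K\"onig-type argument using local finiteness (ii) upgrades this to finiteness of the supremum of lengths --- if $\larc(x)=\infty$, then $\larc(x)=\sup_y \larc(y)$ over the finitely many out-neighbours $y$ of $x$, so some out-neighbour has infinite $\larc$, and by induction one builds an infinite path leaving $x$, a contradiction. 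This K\"onig step is the real content of (iii) and is entirely missing from your proposal; your (iv), while correctly derived from (iii) (and needing no appeal to (ii), since a finite supremum of a nonempty set of integers is attained), therefore inherits the gap.
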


\begin{proof}
Let us first note that since $b$ satisfies~\ATC, and $\phi$ is bounded,
there exists $M>0$ such that \eqref{subeigen} holds for all
$x,y$ such that $d(x,y)> M$.
This implies that all edges $(x,y)$ of $\sat$ satisfy $d(x,y)\leq M$.

\noindent (i) Suppose now that $\sat$ has a circuit $(x_1,\ldots,x_{n+1}=x_1)$. 
We can assume without loss of generality that this circuit is 
elementary, that is all vertices $x_k$ with $k=1,\ldots, n$ are distinct.
Hence, one can construct a bijection $G:X\to X$ 
which coincides with the identity map 
$F$ outside the elements of the circuit, and which acts as 
$x_k\mapsto x_{k+1}$ on the vertices of the circuit.
It is clear that $G$ is locally bounded and different from $F$,
and  since 
$$
 b_{x_1x_2}+\cdots+b_{x_{n-1}x_n}+b_{x_nx_1}=b_{x_1x_1}+\cdots+b_{x_n,x_n}\enspace,
$$
\eqref{eq1.4} does not hold,
 which contradicts the assumption that the identity map is a strong 
or a strong restricted local solution.

Assume next that $\sat$ contains a string $(x_n)_{n\in\Z}$.
Since $\sat$ contains no circuit, all elements $x_n$ of this sequence
are distinct. 
Hence one can construct a bijection $G:X\to X$
which coincides with the identity map 
$F$ outside the elements of the string, and which acts as 
the shift $x_k\mapsto x_{k+1}$ on the string. This bijection is
necessarily different from $F$. Moreover, 
since the distance between the vertices of an edge is bounded by $M$,
the bijection $G$ is locally bounded: $\rho(G,I)\leq M$.
Finally,  by \eqref{eq3.1}, we have 
$$
 \sum_{x\in K} (b_{xF(x)}-b_{xG(x)})=\sum_{x\in K} (\phi_{G(x)}-\phi_x)
 $$
and by the same arguments as in
the proof of Proposition~\ref{prop1.1.ii}, this sum has a zero limit.
This contradicts~\eqref{eq1.4} or~\eqref{eq1.4bis}, and thus 
the assumption that the identity map is a strong or a strong restricted 
local solution.

\noindent  (ii) Since all edges $(x,y)$ of $\sat$ satisfy $d(x,y)\leq M$,
we see that, for all $x\in \vert$,
the set of edges  entering or leaving $x$ is included in the ball of centre 
$x$ and radius $M$ which is finite.

\noindent  (iii) Choose $x\in \vert$. As there are no strings in $\sat$, either all paths
 leaving $x$ or all paths entering $x$ are finite. Consider, say, the
 first case. Suppose by contradiction that $\larc(x)=\infty$, that is
the lengths of the paths leaving $x$ are not  bounded. 
Hence,
\[ \infty=\larc(x)=\sup_{y} \larc(y)\enspace,\]
where the supremum is taken over the vertices $y$ such that $(x,y)$ is
an edge of $\sat$. By Point (ii), this set is finite, from which 
we deduce that at least one of its elements $y$ is such that $\larc(y)=\infty$.
Hence by induction one can construct an infinite path leaving $x$,
which contradicts our assumption.

\noindent  (iv) Again the absence of strings implies that each point belongs to a path that either ends
 in a final point or starts at an initial point.
\end{proof}

\begin{proof}[Proof of Theorem \ref{th3}]
Since $b$ is a normal kernel, the function $\phi\equiv 0$ 
satisfies~\eqref{eq3.2}, 
where the equality holds only on the edges $(x,y)$ of the graph $\sat(0)$.
Our goal is to change $\phi$ (by a successive ``perestroika'') in such a
way that no equality is left,
which would yield to~\eqref{subeigen} for all $x\neq y$.
We shall do this by successive elimination of the
end points of $\sat(0)$.

Namely, let $\phi\in \ell_1(X)$ satisfy~\eqref{eq3.2},
and let us denote respectively by $I_0=I_0(\phi)$ and $F_0=F_0(\phi)$ 
the sets of the initial points and final points of the saturation graph
$\sat(\phi)$.
By Point (iv) of
Proposition~\ref{prop3.1}, we know that either $I_0$ or $F_0$ is nonempty.
Assume for instance that $F_0$ is nonempty and let $x\in F_0$.
Then $b_{xz}+\phi_z<\phi_x$ for all $z\neq x$, 
and $b_{yx}+\phi_x=\phi_y$ for at least one vertex and at most a 
finite number of vertices $y\neq x$ of $\sat(\phi)$.
The first inequality implies that $(A(-\phi))_x<\phi_x$ (by Condition \ATC),
hence it is possible to decrease the value of $\phi$ in 
all final points without changing it elsewhere, 
in such a way that~\eqref{eq3.2} still holds for the new function $\phi'$,
and that $\sat(\phi')$ is equal to the subgraph of $\sat(\phi)$
where all final vertices and all edges entering them are removed.
In particular $V(\phi')=V(\phi)\setminus F_0(\phi)$.
Moreover, for any given function $\psi\in\ell_1(X)$ with positive values,
we can choose $\phi'$ in such a way that $|\phi'_x-\phi_x|\leq \psi_x$
for all $x\in X$, which will imply in particular that $\phi'\in\ell_1(X)$.
Indeed, let us take
$\phi'_x=\phi_x-\min(\psi_x, (\phi_x-(A(-\phi))_x)/2<\phi_x$ for all
$x\in F_0$ and $\phi'_x=\phi_x$ elsewhere.
Since $\phi'\leq \phi$, we get that 
$b_{yz}+\phi'_z\leq b_{yz}+\phi_z$ for all $y,z\in X$ such that
$z\neq y$, with equality if and only if $z\not \in F_0$.
Hence, $b_{yz}+\phi'_z\leq \phi'_y$ for all $y\in X\setminus F_0$ and
$z\neq y$, with equality if and only if
$(y,z)$ is an edge of $\sat(\phi)$ and  $z\not \in F_0$.
Moreover, for $y\in F_0$ and $z\neq y$, we have  
$b_{yz}+\phi_z\leq (A(-\phi))_y<\phi'_y$ , hence 
$b_{yz}+\phi'_z< \phi'_y$, and $(y,z)$ is not an edge of $\sat(\phi')$.


Let us now fix a function $\psi\in\ell_1(X)$, and denote by
$\pcaf(\phi)$ the function $\phi'$ obtained from $\phi$ by the previous
construction on the final points of $\sat(\phi)$.
We denote also by $\pcai(\phi)$ the function $\phi'$ 
obtained from $\phi$ by a similar construction where final points
 are replaced by initial points (or equivalently the kernel $b$ is replaced by
$b^T$ and the functions by their opposite).
This is one step of our ``perestroika'' algorithm.

Now, starting from any function $\phi^0\in \ell_1(X)$ satisfying~\eqref{eq3.2},
in particular the function $\phi^0=0$,
one can construct a sequence of functions
$\phi^n\in\ell_1(X)$ by $\phi^{n+1}=\pcaf(\phi^n)$.
At each step we have $V(\phi^{n+1})=V(\phi^n)\setminus F_0(\phi^n)$.
Hence, since $\phi^{n+1}-\phi^n$ has zero entries outside $F_0(\phi^n)$
and all these sets are disjoint, we get that for all $x\in X$,
$\phi^n_x$ converges in finite time towards some real $\phi_x$,
and since $|\phi^n-\phi^0|\leq \psi$ for all $n\geq 0$, 
the function $\phi=(\phi_x)_{x\in X}$ is in $\ell_1(X)$.
Note that the sequence $\phi^n$ may stop at step $n$ if $F_0(\phi^n)=\emptyset$,
in which case, $\phi$ will be simply this $\phi^n$.
Now, since $\phi^n_x$ converges in finite time for all $x\in X$,
we get easily that $\phi$ satisfies~\eqref{eq3.2} and
that $\sat(\phi)=\cap_{n\geq 0} \sat(\phi^n)$.
We can then start from $\psi^0=\phi$, and construct similarly a 
sequence $\psi^n$ using the algorithm $\pcai$ for initial sets.
The limit $\psi$ is again in $\ell_1(X)$,
satisfies~\eqref{eq3.2} and $\sat(\psi)=\cap_{n\geq 0} \sat(\psi^n)$.

Let us prove that $\sat(\psi)$ is empty or equivalently that
$V(\psi)=\emptyset$, in which case we would have
shown that $\psi$ satisfies~\eqref{subeigen} for all $x\neq y$.
For all $n\in\N\cup\{\infty\}$, we shall consider the following subsets
of the set of vertices of the saturation graph associated to
$\phi$:
\[ F_n(\phi):=\set{x\in V(\phi)}{\larc(x)=n},\quad
I_n(\phi):=\set{x\in X}{\earc(x)=n}\enspace.\]
By Point (iii) of Proposition~\ref{prop3.1}, we know that
for any $\phi\in \ell_1(X)$,
and $x\in V(\phi)$, either $\larc(x)$ or $\earc(x)$ is finite,
hence 
\[  V(\phi)=\bigcup_{n\in\N\cup\{\infty\}} F_n(\phi)=
\bigcup_{n\in\N\cup\{\infty\}} I_n(\phi) 
\quad \text{and}\quad  
F_\infty(\phi)\subset \bigcup_{n\in \N} I_n(\phi)\]
where the unions are disjoint.
But the ``perestroika'' algorithm for final points 
is such that $\sat(\pcaf(\phi))$ is equal to the subgraph of $\sat(\phi)$
where all final vertices and all edges entering them are removed.
Hence all remaining vertices $y$ in $\sat(\pcaf(\phi))$ are such that
$\larc(y)$ is decreased exactly by $1$ ($\earc(y)$ is unchanged), and 
$V(\pcaf(\phi))=V(\phi)\setminus F_0(\phi)$.
We deduce that $F_n(\pcaf(\phi))=F_{n+1}(\phi)$.
Similarly $I_n(\pcai(\psi))=I_{n+1}(\psi)$.
Hence, the above sequence $\phi^n$ satisfies $F_0(\phi^n)=F_n(\phi^0)$,
thus
\[V(\phi^n)=V(\phi^{n-1})\setminus F_0(\phi^{n-1})
=V(\phi^0)\setminus (F_0(\phi^0)\cup\cdots\cup F_{n-1}(\phi^0))\]
and $V(\phi)=\cap_{n\in\N} V(\phi^n)=F_{\infty}(\phi^0)$.
By a similar argument, we get that $V(\psi)=I_{\infty}(\phi)=
F_{\infty}(\phi^0)\cap I_{\infty}(\phi^0)=\emptyset$,
which completes the proof of the theorem.
\end{proof}

\def\cprime{$'$}
\providecommand{\bysame}{\leavevmode\hbox to3em{\hrulefill}\thinspace}
\providecommand{\MR}{\relax\ifhmode\unskip\space\fi MR }
\providecommand{\MRhref}[2]{%
  \href{http://www.ams.org/mathscinet-getitem?mr=#1}{#2}
}
\providecommand{\href}[2]{#2}

\end{document}